\title{Automorphism groups of prime models, and  invariant measures}
\date{\today}
\author{Anand Pillay \thanks{Partially supported by NSF grants DMS-1760212, and DMS-2054271}\\{University of Notre Dame}}
\newtheorem{Theorem}{Theorem}[section]
\newtheorem{Proposition}[Theorem]{Proposition}
\newtheorem{Definition}[Theorem]{Definition}
\newtheorem{Remark}[Theorem]{Remark}
\newtheorem{Lemma}[Theorem]{Lemma}
\newtheorem{Fact}[Theorem]{Fact}
\begin{document}
\maketitle

\begin{abstract}
We adapt the notion from \cite{KPR} and \cite{HKP} of a ( relatively) definable subset of $Aut(M)$ when $M$ is a  saturated structure, to the case $Aut(M/A)$  when $M$ is atomic and strongly $\omega$-homogeneous (over a set $A$). We discuss the existence and uniqueness of invariant measures on the Boolean algebra of definable subsets of $Aut(M/A)$.  For example when $T$ is stable, we have existence and uniqueness. 

We also discuss the compatibility of our definability notions with definable Galois cohomology from \cite{Pillay-Galois} and differential Galois theory.
\end{abstract}

\section{Introduction and preliminaries}

Typical examples of $M$ being atomic and strongly $\omega$-homogeneous over $A$, are when (a)  $A = \emptyset$ and $M$ the unique countable model of an $\aleph_{0}$-categorical (complete) countable theory, (b) $Th(M)$ is $\omega$-stable (or totally transcendental) and $M$ is prime over $A$, (c) $M = acl(A)$.  

The notion of a (relatively) definable subset of an automorphism group $Aut(M)$ was introduced in \cite{KPR} and later in \cite{HKP} where invariant measures were also considered, but all in  the context of $M$ being sufficiently saturated and homogeneous.  The reason for looking rather at prime models is that their   automorphism groups may support  ``Galois theories" of one kind or another. For example when  $M = acl(A)$, then $Aut(M/A)$ has the structure of a profinite group, the ``definable" subsets of $Aut(M/A)$ are precisely the clopens, there is a unique invariant measure on $Aut(M/A)$  (Haar measure) as well as a Galois correspondence (working in $M^{eq}$). 

In Section 2 we prove the main results on existence and uniqueness of invariant measures. Under the additional assumptions that $T$ is totally transcendental and $M$ is prime over $A$, we extend the results to ``$Aut(B/A)$" where $B\supseteq A$ is $Aut(M/A)$-invariant (also called ``normal").
In Section 3 we make several related observations. First that definable {\em subgroups} of $Aut(M/A)$ are precisely fixators of imaginaries. Secondly  that the notion of a definable subset of $Aut(M/A)$ is compatible with what, in \cite{Pillay-Galois}, we called definable cocycles.  Thirdly we specialize to differential Galois theory, remarking that when $Aut(L/K)$ has the structure of a (pro-)algebraic group $G$ (in the constants of $K$) then the two notions of definability coincide.

Our model theory notation is standard. $T$ denotes a complete theory and we work inside a very saturated model ${\bar M}$ of $T$. We will typically be working with $1$-sorted theories and structures, although imaginaries  also enter the picture.  As a rule $A, B$,.. denote small subsets of ${\bar M}$ and $M, N$,... small elementary submodels of ${\bar M}$. This paper will be discussing stable theories as well as $\omega$-stable (or totally transcendental) theories. For stable theories the reference is Chapter 1 of \cite{Pillay-GST}, and for $\omega$-stable theories,  Chapters 5 and 8 of \cite{Pillay-Introductionstability},  and  Chapter 18 of \cite{Poizat-modeltheory}. 

\begin{Definition} Let $M$ be a model and $A\subseteq M$. 
By a {\em definable} subset of $Aut(M/A)$ we mean $\{\sigma\in Aut(M/A): M\models \phi(\sigma({\bar b}), {\bar b})\}$ for some $L_{A}$-formula $\phi({\bar x}, {\bar y})$ with ${\bar x}$, ${\bar y}$ finite tuples of variables of the same length and ${\bar b}$ a tuple from $M$ of the same length. 
\end{Definition}  

This has nothing to do with definable subsets of the structure $(Aut(M/A), \cdot)$


Recall that the usual (``pointwise") topology on automorphism groups is as follows: a basic open set in $Aut(M/A)$ is $\{\sigma: \sigma({\bar a}) = {\bar b}\}$ where ${\bar a}$, ${\bar b}$ are finite tuples from $M$ of the same length. Note that such a basic open is definable in the sense of Definition 1.1 by taking $\phi$ to be ${\bar x} = {\bar y})$. 

\begin{Remark} The collection of definable subsets of $Aut(M/A)$ is an $Aut(M/A)$-invariant Boolean subalgebra of the Boolean algebra of clopen subsets of $Aut(M/A)$.
\end{Remark} 
\begin{proof}   Clearly the definable subsets of $Aut(M/A)$ form a Boolean algebra.  
Let $X = \{\sigma: M\models\phi(\sigma({\bar b}), {\bar b})\}$ be such a definable set. Then $X$ is the union of the basic open sets $\{\sigma: \sigma({\bar b}) = {\bar c}\}$ as ${\bar c}$ ranges over realizations of $\phi({\bar x}, {\bar b})$ in $M$, so $X$ is open  (so clopen as the complement of $X$ is also definable so open).
\newline
For invariance:  Let $X\subseteq Aut(M/A)$ be defined by $\phi({\bar x}, {\bar b})$ as above. Let $\tau\in Aut(M/A)$. Then $\tau(X)$ is defined by  $\chi({\bar x}_{1}, {\bar x}_{2}, {\bar b}, \tau({\bar b}))$ where  $\chi({\bar x}_{1}, {\bar x}_{2}, {\bar y}_{1}, {\bar y}_{2})$ is $\phi({\bar x}_{1}, {\bar y}_{2})$. 


\end{proof} 

We let $Def(Aut(M/A))$ denote this $Aut(M/A)$-invariant Boolean algebra of definable subsets of $Aut(M/A)$.

\begin{Definition} (i) By a Keisler measure on $Aut(M/A)$ we mean a finitely additive probability measure on  $Def(Aut(M/A))$. 
\newline
(ii) We will say that $Aut(M/A)$ is {\em definably amenable} if there is an $Aut(M/A)$ invariant Keisler measure on $Aut(M/A)$.
\end{Definition}

There is a slight discrepancy with notation from \cite{HKP} where we used the expressions relatively definable and relatively amenable.

Recall that a topological group $G$ is said to be  amenable if for any continuous action of $G$ on a compact space $X$, $X$ supports a $G$-invariant probability measure. When $M$ is sufficiently saturated and homogeneous over $A$, the definable amenability of $Aut(M/A)$ is equivalent to the (continuous)  action of $Aut(M/A)$ on a certain type space supporting an invariant probability measure.  We will prove the same when $M$ is atomic and strongly $\omega$-homogeneous over $A$.  Hence in these cases, amenability of $Aut(M/A)$ as a topological group implies definable amenability. 

In any case, for $M$ very saturated (with respect to $|A|$), definable amenability of $Aut(M/A)$ was in \cite{HKP} one of the equivalent definitions of $T$ being {\em first order amenable} over $A$.

\section{Invariant measures}
 Recall that $M$ is atomic over $A$ if the type over $A$ of any finite tuple from $M$ is isolated. And $M$ is strongly $\omega$-homogeneous over $A$ if whenever $a,b$ are finite tuples from $M$ with the same type over $A$, then there is $\sigma\in Aut(M/A)$ such that $\sigma(a) = b$. 
If $T$ is totally transcendental (i.e. $\omega$-stable when in a countable language) then there are prime models over all sets $A$ and these are atomic and strongly $\omega$-homogeneous (over $A$).

\begin{Theorem} Suppose $M$ is atomic and strongly $\omega$--homogeneous over $A$.
Then:
\newline
(i) When $T$ is stable, $Aut(M/A)$ is uniquely definably amenable (has a  unique invariant Keisler measure). 
Moreover if $A$ is algebraically closed (in $T^{eq}$), this unique invariant Keisler measure is $\{0,1\}$-valued.
\newline
(ii) When $M = acl(A)$, $Aut(M/A)$ is also uniquely definably amenable. Moreover the definable subsets of $Aut(M/A)$ coincide with the clopen subsets of  the profinite group $Aut(M/A)$,
and the unique invariant Keisler measure on $Aut(M/A)$ coincides with the Haar measure. 
\end{Theorem}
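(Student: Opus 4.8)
The plan is to recast an invariant Keisler measure as measure-theoretic data attached to the type spaces over $A$, and then to read off both parts from the forking calculus. Fix a finite tuple $\bar b$ from $M$ and put $p = tp(\bar b/A)$. Since $M$ is strongly $\omega$-homogeneous over $A$, the orbit map $\sigma \mapsto \sigma(\bar b)$ has image exactly the set $p(M)$ of realizations of $p$ in $M$, and it identifies the restriction of a Keisler measure to the sets $X_{\phi(\bar x,\bar b)} = \{\sigma : M\models \phi(\sigma(\bar b),\bar b)\}$ with a finitely additive probability measure $\nu_{\bar b}$ on the clopen algebra of $[p]\subseteq S_{\bar x}(A\bar b)$ cut out by the $L_A$-formulas $\phi(\bar x,\bar b)$. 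Atomicity ensures that the types of the $\sigma(\bar b)$ over $A\bar b$ are exactly the isolated ones, so, these being dense, $\nu_{\bar b}$ is determined by, and extends to, a regular measure on $[p]$. Unwinding the invariance computation of Remark 1.2, invariance of the global measure amounts to the conjunction of: (a) $\nu_{\bar b}$ depends only on $tp(\bar b/A)$ (equivalently, the family is $Aut(M/A)$-equivariant in $\bar b$); and (b) for $\bar b\subseteq \bar c$ the measure $\nu_{\bar b}$ is the marginal of $\nu_{\bar c}$ under the restriction map $S_{\bar x}(A\bar c)\to S_{\bar x}(A\bar b)$. So the first step is to prove that invariant Keisler measures correspond bijectively to compatible, equivariant systems $\{\nu_{\bar b}\}$ of this kind.

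For existence in the stable case I would exhibit the canonical such system coming from nonforking. When $A$ is algebraically closed in $T^{eq}$ every $p$ is stationary, so it has a unique nonforking extension $q$ to $A\bar b$; setting $\nu_{\bar b}=\delta_q$ gives a $\{0,1\}$-valued system whose compatibility and equivariance are precisely monotonicity, transitivity, and invariance under automorphisms fixing $A$ of nonforking. This produces the invariant, $\{0,1\}$-valued measure of (i) in the algebraically closed case. For general $A$ I would pass to the Galois group $\mathcal G = Aut(acl^{eq}(A)/A)$, a profinite group acting transitively on the nonforking extensions of $p$ to $acl^{eq}(A)$, and define $\nu_{\bar b}$ by integrating the point masses at the induced extensions to $A\bar b$ against the Haar measure of $\mathcal G$; invariance of Haar makes the system equivariant. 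Part (ii) is then the degenerate case in which everything is algebraic: when $M=acl(A)$ the group $Aut(M/A)$ is the profinite limit of the finite groups acting on the $A$-algebraic sets, a definable $X_{\phi(\bar x,\bar b)}$ constrains $\sigma$ only on the finite orbit of $\bar b$ and so is a finite union of cosets of an open subgroup, i.e. clopen, while conversely each basic open is definable (Remark 1.2); hence the definable sets are exactly the clopens, and the system above is visibly normalized Haar measure, whose existence and invariance are classical.

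The heart of the theorem, and the step I expect to be the main obstacle, is the uniqueness clause of (i). Using the fibration $Aut(M/A)\to\mathcal G$ and uniqueness of Haar on the compact group $\mathcal G$, I would reduce to the case $A=acl^{eq}(A)$, where uniqueness becomes the assertion that any invariant Keisler measure gives measure zero to every $\phi(\bar x,\bar b)$ forking over $A$; since $p$ is then stationary, this forces $\nu_{\bar b}$ onto the unique nonforking extension and pins down the measure. The standard way to annihilate a dividing formula is to move the parameter along a witnessing $A$-indiscernible sequence $\bar b=\bar b_0,\bar b_1,\dots$ and observe that the conjugate definable sets $\{\sigma : M\models \phi(\sigma(\bar b),\bar b_i)\}$ are pairwise $(k-1)$-bounded, by the $k$-inconsistency of $\{\phi(\bar x,\bar b_i)\}_i$, and equal in measure, forcing that measure to $0$. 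The genuine difficulty, absent in the saturated setting of \cite{HKP}, is that this argument needs the witnessing configuration to live inside the prime model $M$: both the conjugating automorphisms and the requisite almost-disjoint family require the finite tuples $\bar b_0\cdots \bar b_{n-1}$ to be realized in $M$, equivalently their type over $A$ to be isolated. Securing enough such configurations inside $M$ — so that forking is still detected by the measures $\nu_{\bar b}$ available on the restricted index set of isolated types — is the delicate point on which the uniqueness argument turns, and is where atomicity and strong $\omega$-homogeneity must be used most carefully.
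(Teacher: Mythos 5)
Your existence argument and your treatment of (ii) essentially track the paper. The paper also reduces everything to an invariant measure on a type space (a single space $S_{p_0}(M)$ of extensions of the type over $A$ of an enumeration of $M$, rather than your projective system of finite marginals, but the two are equivalent), and its construction of the measure is precisely your average over the nonforking extensions: for each finite $\Delta$ the restrictions of nonforking extensions of $p_0$ form a finite set distinguished by canonical parameters in $acl^{eq}(A)$ and permuted transitively by $Aut(M/A)$ via strong $\omega$-homogeneity, and one takes the uniform average. The $\{0,1\}$-valued clause for $A=acl^{eq}(A)$ comes from stationarity in both treatments, and (ii) is the same profinite/Haar argument.

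The gap is exactly where you say you expect it: uniqueness in (i). You correctly reduce it to showing that every formula $\phi(\bar x,\bar b)$ forking over $A$ is null for every $Aut(M/A)$-invariant Keisler measure $\nu$ over $M$, and you propose the standard dividing-chain argument run inside $M$; but you then concede that realizing the witnessing configuration inside the atomic model is ``the delicate point'' without resolving it, and it genuinely does not go through as stated: atomicity guarantees that each individual $A$-conjugate $\bar b_i$ of $\bar b$ has an isolated, hence realized, type over $A$, but the \emph{joint} type over $A$ of a $k$-inconsistent family $(\bar b_0,\dots,\bar b_{n-1})$ need not be isolated, so the configuration need not live in $M$ at all. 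The paper sidesteps this with a different device: by stability every Keisler measure over a model is definable (Lemma 1.7 of \cite{Keisler}); $Aut(M/A)$-invariance together with atomicity and strong $\omega$-homogeneity lets one replace a defining formula $\psi(\bar y,\bar c)$ by $\exists\bar z(\psi(\bar y,\bar z)\wedge\theta(\bar z))$ with $\theta$ isolating $tp(\bar c/A)$, so $\nu$ is definable over $A$; the defining schema then extends $\nu$ to a global $Aut(\bar M/A)$-invariant Keisler measure on the monster model, where the known fact that forking formulas are null for global invariant measures (Fact 1.2 of \cite{invariant-measures-simple}) yields the contradiction. This ``extend to the monster via definability over $A$'' step is the missing idea in your proposal; without it, or some substitute, the uniqueness clause is not proved.
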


The proof of (i) will be an adaptation  of known results for Keisler measures on automorphism groups of saturated models to the case of atomic and strongly $\omega$-homogeneous models. 
 
Let us now fix $M$ atomic and strongly $\omega$-homogeneous over $A$ and fix also an enumeration ${\bar m}$ of $M
$.  Let ${\bar x}$ be an (infinite) tuple of variables with enumeration corresponding to ${\bar m}$.  And let $p_{0}({\bar 
x}) = tp({\bar m}/A)$.  We let $S_{p_{0}}(M)$ denote the space of extensions of $p_{0}({\bar x})$ to complete types in 
variables ${\bar x}$ over $M$. So $S_{p_{0}}(M)$ is a closed subspace of the space $S_{\bar x}(M)$ of all complete types 
over $M$ in variables ${\bar x}$.  We may assume that ${\bar m}$ is enumerated by ordinals $<$ some cardinal.  Now a clopen subset of 
$S_{p_{0}}(M)$ can be written in the form $\phi({\bar x}, {\bar m})$. However only finitely many variables from ${\bar x}$ and elements of $\bar m$ actually appear in this formula.
Hence, adding if necessary dummy variables and constants we may assume the formula is of the form
$\phi(x_{\alpha_{1}}, ....,x_{\alpha_{n}}, m_{\alpha_{1}},...,m_{\alpha_{n}})$ for some $\alpha_{1} < ....< \alpha_{n}$. 

\begin{Definition}  Let $f$ be the map from the collection of clopen subsets of $S_{p_{0}}(M)$ to $Def(Aut(M/A))$ defined by:
given, as in the discussion above,  $X$  the clopen defined by $\phi(x_{\alpha_{1}},...,x_{\alpha_{n}}, m_{\alpha_{1}},...,m_{\alpha_{n}})$, let 
$f(X) = \{\sigma\in Aut(M/A): M\models \phi(\sigma( m_{\alpha_{1}},...,m_{\alpha_{n}}),  m_{\alpha_{1}},...,m_{\alpha_{n}})\}\in Def(Aut(M/A))$.
\end{Definition}


We first establish an analogue of Proposition 2.12(1) from \cite{HKP}, 
\begin{Lemma}  The map $f$ defined above is an isomorphism of Boolean algebras, which moreover commutes with the actions of $Aut(M/A)$.  
\end{Lemma}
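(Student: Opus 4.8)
The plan is to realize $f$ as a pullback map along a natural evaluation map, and then extract the isomorphism from density of realized types. Define $r\colon Aut(M/A)\to S_{p_{0}}(M)$ by $r(\sigma)=tp(\sigma({\bar m})/M)$; this is well defined because $\sigma$ fixes $A$ pointwise, so $\sigma({\bar m})$ realizes $p_{0}$ over $A$ and $r(\sigma)$ extends $p_{0}$. Unwinding Definition 2.2, for a clopen $X\subseteq S_{p_{0}}(M)$ cut out by $\phi(x_{\alpha_{1}},\dots,x_{\alpha_{n}},m_{\alpha_{1}},\dots,m_{\alpha_{n}})$ one has $f(X)=r^{-1}(X)=\{\sigma:\phi\in r(\sigma)\}$. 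From this description three facts are immediate: $f$ does not depend on the formula chosen to represent $X$ (a preimage depends only on the set $X$); $f$ lands in $Def(Aut(M/A))$; and $f$ commutes with finite unions, intersections and complements, since preimages do. Hence $f$ is a Boolean-algebra homomorphism.

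For surjectivity I would note that, by Definition 1.1, every member of $Def(Aut(M/A))$ has the form $\{\sigma:M\models\phi(\sigma({\bar b}),{\bar b})\}$ for an $L_{A}$-formula $\phi$ and a finite tuple ${\bar b}$ from $M$; writing the entries of ${\bar b}$ as $m_{\alpha_{1}},\dots,m_{\alpha_{n}}$ exhibits this set as $f(X)$ for the clopen $X=\{q\in S_{p_{0}}(M):\phi\in q\}$.

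The heart of the matter is injectivity, which reduces to the density statement: \emph{if $Z\subseteq S_{p_{0}}(M)$ is a nonempty clopen then $f(Z)=r^{-1}(Z)\neq\emptyset$}. Granting this, $f(X)=f(Y)$ forces $f(X\triangle Y)=\emptyset$, whence $X\triangle Y=\emptyset$ (as $f$ is a homomorphism), so $X=Y$. To prove density, suppose $Z$ is defined by $\phi({\bar z},{\bar a})$ with ${\bar z}=(x_{\alpha_{1}},\dots,x_{\alpha_{n}})$ and ${\bar a}=(m_{\alpha_{1}},\dots,m_{\alpha_{n}})$, and pick $q\in Z$. Realizing $q$ in ${\bar M}$ by an infinite tuple ${\bar b}$ and letting ${\bar c}=(b_{\alpha_{1}},\dots,b_{\alpha_{n}})$, we get ${\bar c}\equiv_{A}{\bar a}$ (since ${\bar b}\equiv_{A}{\bar m}$) and ${\bar M}\models\phi({\bar c},{\bar a})$. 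Now both hypotheses on $M$ enter. By atomicity of $M$ over $A$, the type $tp({\bar a}/A)$ is isolated, say by $\chi({\bar z})$; since ${\bar c}\equiv_{A}{\bar a}$, the sentence $\exists{\bar z}\,(\chi({\bar z})\wedge\phi({\bar z},{\bar a}))$ holds in ${\bar M}$, and as it has parameters in $A{\bar a}\subseteq M$ and $M\preceq{\bar M}$, it holds in $M$, yielding ${\bar a}'\in M$ with $\chi({\bar a}')$ and $\phi({\bar a}',{\bar a})$. Because $\chi$ isolates $tp({\bar a}/A)$ we obtain ${\bar a}'\equiv_{A}{\bar a}$, so by strong $\omega$-homogeneity there is $\sigma\in Aut(M/A)$ with $\sigma({\bar a})={\bar a}'$; then $M\models\phi(\sigma({\bar a}),{\bar a})$, i.e. $\sigma\in f(Z)$.

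Finally, for equivariance I would check directly, with the conventions of Remark 1.2, that $r(\tau\sigma)=\tau\cdot r(\sigma)$ for the translation action on $Aut(M/A)$ and the pushforward action on $S_{p_{0}}(M)$; this gives $f(\tau\cdot X)=r^{-1}(\tau\cdot X)=\tau\cdot r^{-1}(X)=\tau\cdot f(X)$ and is a routine computation. I expect the density step to be the only genuine obstacle, and the point to watch is that the witness ${\bar c}$ lives in the monster while the conjugating automorphism must act inside $M$ — exactly the gap bridged by combining atomicity (to pull a realization back into $M$ via elementarity) with strong $\omega$-homogeneity (to turn the resulting equality of types into an element of $Aut(M/A)$).
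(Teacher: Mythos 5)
Your proof is correct and its essential content — the density step showing that a nonempty clopen has nonempty image, via realizing the type in the monster, using atomicity to isolate the relevant finite type, pulling a realization back into $M$ by elementarity, and then applying strong $\omega$-homogeneity — is exactly the argument the paper gives (the paper explicitly checks only this "main point"). Your additional scaffolding via the evaluation map $r(\sigma)=tp(\sigma(\bar m)/M)$ and the identification $f=r^{-1}$ is a clean way to package the routine parts (well-definedness, homomorphism, surjectivity, equivariance) that the paper leaves implicit.
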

\begin{proof}  We will just check the main point that if the clopen $X$ of $S_{p_{0}}(M)$ is nonempty, then $f(X)$ is nonempty.
For simplicity of notation, let us assume that $X$ is given by $\phi(x_{1},..,x_{k}, m_{1},..,m_{k})$. As $X$ is nonempty, let $p({\bar x})\in S_{p_{0}}(M)$ contain
$\phi(x_{1},..,x_{k}, m_{1},..,m_{k})$. Let ${\bar n}$ realize $p({\bar x})$ in ${\bar M}$.  So $(n_{1},..,n_{k})$ has the same type over $A$ as $(m_{1},..,m_{k})$. Let $\psi(x_{1},..,x_{k})$ isolate this type. Then the formula $\psi(x_{1},..,x_{k})\wedge\phi(x_{1},..,x_{k}, m_{1},..,m_{k})$  is realized in $M$, by $(b_{1},..,b_{k})$ say. By strong $\omega$-homogeneity of $M$ over $A$, there is $\sigma\in Aut(M/A)$ such that $\sigma(m_{1},..,m_{k}) = (b_{1},..,b_{k})$, whence $\sigma$ is in the definable subset of $Aut(M/A)$ determined by $\phi(x_{1},..,x_{k}, m_{1},..,m_{k})$ which is precisely $f(X)$.

\end{proof}

\noindent
{\bf Proof of Theorem 2.1  (i).}  By Lemma 2.3,  we only have to  find an $Aut(M/A)$-invariant finitely additive probability measure on the Boolean algebra of clopens of $S_{p_{0}}(M)$ and show it to be unique.  The latter means the same thing as finding a (unique)    $Aut(M/A)$-invariant (regular) Borel probability measure on the space $S_{p_{0}}(M)$. Furthermore this amounts to the same thing as finding a Keisler measure $\mu({\bar x})$ over $M$ which is $Aut(M/A)$-invariant and extends $p_{0}(\bar x) = tp({\bar m}/A)$ (that is every formula in $p_{0}({\bar x})$ gets $\mu$-measure $1$) and showing it is the unique such Keisler measure.  When $M$ is  the ``monster model" (i.e. sufficiently saturated and homogeneous relative to $A$) and ${\bar x}$ is a finite tuple of variables, this is precisely Remark 1.3 of \cite{invariant-measures-simple}.  Passing from the finite tuple ${\bar x}$ to an infinite tuple is unproblematic. 
But we have to say something about working with $M$ atomic and strongly $\omega$-homogeneous over $A$ in place of  the monster model $\bar M$. 
Fix a finite set $\Delta$  of $L$- formulas $\phi({\bar x}, {\bar y})$, where we may assume that ${\bar y}$ is a fixed finite tuple of variables. (Of course only finitely many variables from $
{\bar x}$ actually appear in the formulas in $\Delta$.)  Consider the set $P_{\Delta} = \{p({\bar x})|\Delta: p({\bar x})\in S_{\bar x}(M)$ is a nonforking extension of $p_{0}({\bar x})\}$.   
Fix $q\in P_{\Delta}$. Then by general stability,  $q$ is definable by formulas  $\psi_{\phi}({\bar y}, e)$ for $\phi\in \Delta$ and some $e\in acl^{eq}(A)$.  Moreover every $q'\in 
P_{\Delta}$ has defining schema  $(\psi_{\phi}({\bar y}, e'))_{\phi\in \Delta}$ for some $e'$ realizing $tp(e/A)$. By strong $\omega$-homogeneity of $M$ over $A$, for 
each such $e'$ there is $\sigma\in Aut(M/A)$ taking $e$ to $e'$ so taking $q$ to $q'$.  So $Aut(M/A)$ acts transitively on $P_{\Delta}$, and for $\chi({\bar x}, {\bar b})$ a $\Delta$-formula over $M$, define $\mu_{\Delta}(\chi({\bar x}, {\bar b}))$ to be the proportion of those $q\in P_{\Delta}$ which contain $\chi({\bar x}, {\bar b})$.  By construction, $\mu_{\Delta}$ is $Aut(M/A)$-invariant. 
Now one has to check that if $\Delta\subseteq \Delta'$ then $\mu_{\Delta'}$ agrees with $\mu_{\Delta}$ on ${\Delta}$-formulas over $M$.  This is because $P_{\Delta}$ is precisely the set of restrictions to $\Delta$ of the types in $P_{\Delta'}$.  Hence the(directed)  union of the $\mu_{\Delta}$ gives a Keisler measure $\mu({\bar x})$ over $M$ which by construction extends $p_{0}({\bar x})$ and is $Aut(M/A)$-invariant. 

In \cite{invariant-measures-simple} when $M$ was the a monster model, we obtained uniqueness of $\mu$ from  Fact 1.2 in that paper that with no restriction on the theory, any formula which forks over $A$ has measure $0$ for any global $Aut(M/A)$-invariant Keisler measure. In our current context ($M$ atomic and strongly $\omega$-homogeneous over $A$), it is likewise enough for uniqueness of the $\mu$ constructed in the previous paragraph that any formula $\phi({\bar x}, {\bar b})$ over $M$ which forks over $A$ has $\nu$-measure $0$ for any $Aut(M/A)$-invariant Keisler measure $\nu({\bar x})$ over $M$.  So suppose $\nu({\bar x})$ is a Keisler measure over $M$ and $\phi({\bar x},{\bar b})$ forks over $A$ and has positive $\nu$-measure (and we look for a contradiction).  By stability $\nu$ is {\em definable}, namely for any  $L$-formula $\chi({\bar x}, {\bar y})$ and disjoint closed subsets $C_{1}, C_{2}$ of $[0,1]$ there is a formula $\psi({\bar y},{\bar c})$ over $M$ which separates $\{{\bar b}\in M: \nu(\chi({\bar x}, {\bar b})) \in C_{1}\}$ and $\{{\bar b}\in M: \nu(\chi({\bar x}, {\bar b}))\in C_{2}\}$.   (This follows from Lemma 1.7 of \cite{Keisler} for example.) We claim that from $Aut(M/A)$-invariance of $\nu$, $\nu$ is actually definable over $A$, namely the formula $\psi({\bar y},{\bar c})$ can be chosen over $A$ in the above. Let $\theta({\bar z})$ be a formula over $A$ isolating $tp({\bar c}/A)$. Then by $Aut(M/A)$-invariance of $\nu$, and strong $\omega$-homogeneity over $A$ of $M$, we can replace $\psi({\bar y},{\bar c})$ by $\exists {\bar z}(\psi({\bar y}, {\bar z})\wedge\theta({\bar z}))$ which is over $A$. 
We can now use the same ``defining scheme" to extend $\nu({\bar x})$ to a global (over the monster model ${\bar M}$) Keisler measure $\nu'({\bar x})$. $\nu'$ will be $Aut({\bar M}/A)$-invariant (as it is definable over $A$) and $\phi({\bar x}, {\bar b})$ still forks over $A$ and has positive $\nu'$ measure.  This contradicts Fact 1.2 of \cite{invariant-measures-simple}. 

So we have proved unique definable amenability of $Aut(M/A)$. When $A$ is algebraically closed,  $p_{0} = tp({\bar m}/A)$ has a unique nonforking extension over $M$, which by above arguments is also the unique $Aut(M/A)$-invariant Keisler measure over $M$ extending $p_{0}$.  This completes the proof of Theorem 2.1(i).  \qed

\vspace{5mm}
\noindent
{\bf Proof of Theorem 2.1 (ii).}  Now suppose that $T$ is arbitrary and $M = acl(A)$. So $M$ is both atomic over $A$ and strongly $\omega$-homogeneous over $A$. In the pointwise topology, the basic opens of ${\mathcal G} = Aut(M/A)$  are of the form $\{\sigma: \sigma({\bar a}) = {\bar b}\}$ for ${\bar a}$, ${\bar b}$ finite tuples of the same length. Each such basic open is clopen and  is a coset of the subgroup $Fix({\bar a})$ which has finite index in ${\mathcal G}$ as the orbit of ${\bar a}$ under ${\mathcal G}$ is finite.  So ${\mathcal G}$ is also compact (and moreover profinite).  Any clopen in ${\mathcal G}$ is by compactness a finite union of basic opens, so  will therefore be some (finite) union of cosets of $Fix({\bar b})$ for some ${\bar b}$.  Using Remark 1.2 the clopens of $\mathcal G$ are precisely the definable subsets of $\mathcal G$. So a $\mathcal G$-invariant finitely additive measure on $Def(\mathcal G)$ is the same thing as a $\mathcal G$-invariant finitely additive measure on the clopens of $\mathcal G$, which as $\mathcal G$ is profinite  induces and is induced by a unique regular $G$-invariant Borel probability measure on $\mathcal G$, the (unique) Haar measure on the compact group $G$.

Actually in  this case one can explicitly define $\mu$, by $\mu(Fix({\bar b})) = 1/n$ where $n$ is the index of $Fix({\bar b})$ in $\mathcal G$, and check that it extends uniquely to a $\mathcal G$-invariant Keisler measure on $\mathcal G$ and that it is the only one.

\vspace{5mm}
\noindent
Recall from \cite{KPR} that first order amenability (over $A\subseteq {\bar M}$) of a complete theory $T$ (with monster model ${\bar M}$) can be defined as: any complete type $p({\bar x})$ over $A$ (where ${\bar x}$ may be an infinite tuple of variables) extends to a global $Aut({\bar M}/A)$-invariant Keisler measure.

So we obtain:
\begin{Proposition} Suppose $T$ is first order amenable over $A$ and $M$ is both atomic and strongly $\omega$-homogeneous over $A$. Then $Aut(M/A)$ is definably amenable.
\end{Proposition}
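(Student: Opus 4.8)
The plan is to reduce the statement to the existence of an invariant Keisler measure over $M$, exactly as in the proof of Theorem 2.1(i), and then to produce such a measure by restricting a global invariant measure supplied by first order amenability. By Lemma 2.3 the Boolean algebra $Def(Aut(M/A))$ is isomorphic, compatibly with the $Aut(M/A)$-actions, to the Boolean algebra of clopens of $S_{p_{0}}(M)$; and as recorded in the proof of Theorem 2.1(i), an $Aut(M/A)$-invariant finitely additive probability measure on the latter is the same thing as an $Aut(M/A)$-invariant Keisler measure $\mu({\bar x})$ over $M$ extending $p_{0}({\bar x}) = tp({\bar m}/A)$. So it suffices to produce one such $\mu$.

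First I would invoke first order amenability of $T$ over $A$: applied to the (possibly infinite) type $p_{0}({\bar x})$ over $A$, it yields a global Keisler measure $\mu'({\bar x})$ over ${\bar M}$ which extends $p_{0}$ and is $Aut({\bar M}/A)$-invariant. Let $\mu$ be the restriction of $\mu'$ to $L_{M}$-formulas (i.e. to formulas $\phi({\bar x}, {\bar b})$ with ${\bar b}$ from $M$). Then $\mu$ is a Keisler measure over $M$, and since every formula of $p_{0}$ is over $A\subseteq M$ it still receives $\mu$-measure $1$, so $\mu$ extends $p_{0}$.

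It remains to check that $\mu$ is $Aut(M/A)$-invariant. The key observation is that every $\sigma\in Aut(M/A)$ extends to some ${\bar\sigma}\in Aut({\bar M}/A)$: indeed $\sigma$ is a partial elementary map of ${\bar M}$ with domain the small set $M$, fixing $A$ pointwise, so by strong homogeneity of the monster model it extends to an automorphism ${\bar\sigma}$ of ${\bar M}$ fixing $A$. Now for a formula $\phi({\bar x}, {\bar b})$ over $M$ the action of $\sigma$ on the parameters agrees with that of ${\bar\sigma}$ (as ${\bar\sigma}$ extends $\sigma$ and ${\bar b}\in M$), and both $\phi({\bar x}, {\bar b})$ and $\phi({\bar x}, \sigma({\bar b}))$ lie over $M$; hence the $Aut({\bar M}/A)$-invariance of $\mu'$ under ${\bar\sigma}$ restricts to $Aut(M/A)$-invariance of $\mu$ under $\sigma$. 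Transporting $\mu$ back through the isomorphism $f$ of Lemma 2.3 then gives an $Aut(M/A)$-invariant Keisler measure on $Aut(M/A)$, witnessing definable amenability.

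I do not expect a serious obstacle here; the entire content sits in the reduction already carried out for Theorem 2.1(i), together with the routine fact that automorphisms of the small model $M$ over $A$ lift to the monster. The one point to state carefully is this lifting and the compatibility of the two parameter-actions, since it is precisely what converts the global invariance (under $Aut({\bar M}/A)$) of $\mu'$ into the weaker invariance (under $Aut(M/A)$) that we require of $\mu$.
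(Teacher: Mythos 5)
Your proposal is correct and follows essentially the same route as the paper: reduce via Lemma 2.3 to finding an $Aut(M/A)$-invariant Keisler measure over $M$ extending $p_{0}$, obtain a global $Aut({\bar M}/A)$-invariant measure from first order amenability, and restrict it to formulas over $M$. The only difference is that you spell out the lifting of automorphisms of $M$ to the monster, which the paper leaves implicit; that is a reasonable detail to include.
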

\begin{proof}  Again let $p_{0} = tp({\bar m}/A)$. The clopens of $S_{p_{0}}(M)$ are precisely the clopens of $S_{p_{0}}({\bar M})$ which are defined by formulas  over $M$. Let $\mu$ be an $Aut({\bar M}/A)$ invariant Keisler measure on $S_{\bar m}({\bar M})$ given by first order amenability of $T$ over $A$.
Then restricting $\mu$ to the clopens defined over $M$ gives the required $Aut(M/A)$-invariant Keisler measure on $Aut(M/A)$  (using Lemma 2.3). 
\end{proof}

We now want to relativise the above constructions to suitable sets $B$ with $A\subseteq B \subseteq M$, namely consider the group of $A$-elementary permutations (in the sense of the ambient model) of $B$ in place of $Aut(M/A)$.  It is convenient at this point to make the {\bf assumption} that $T$ is {\em totally transcendental} (t.t)., which is equivalent to $\omega$-stability when $T$ is countable. 
$T$ being t.t. means that in the monster model $\bar M$, every formula with parameters has ordinal valued Morley rank.  The basic fact is that when $T$ is t.t., over any set $A\subseteq \bar M$ there is a unique (up to $A$-isomorphism) prime model $M$ over $A$. $M$ is an elementary substructure of ${\bar M}$ containing $A$ and characterized, up to isomorphism over $A$ by,  $M$ is atomic over $A$, and $M$ contains no uncountable $A$-indiscernible sequence.  (And of course $M$ is strongly $\omega$-homogeneous over $A$).

\begin{Definition} Let $M$ be prime over $A$  and $A\subseteq B \subseteq M$. We say that $B$ is {\em normal over $A$} if $B$ is fixed setwise by $Aut(M/A)$, equivalently for all finite tuples ${\bar b}$, all realizations of (the isolated type) $tp({\bar b}/A)$ in $M$ are in $B$. 
\end{Definition}

The following is well-known (see Section 18.2 of \cite{Poizat-modeltheory} for example). 

\begin{Lemma} Suppose $M$ is prime over $A$.  Assume $A\subseteq B \subseteq M$ and that $B$ is normal over $A$. Then
\newline
(i) $M$ is also prime over $B$, and 
\newline
(ii) The group of permutations of $B$ induced by $Aut(M/A)$  coincides with the group of $A$-elementary permutations of $B$.  We call this group $Aut_{M}(B/A)$.
\end{Lemma}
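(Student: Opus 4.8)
The plan is to prove (i) first, since (ii) will follow from it. For (i) I would use the characterization of prime models in the totally transcendental setting recalled just before the statement: a model $N\supseteq B$ is prime over $B$ iff $N$ is atomic over $B$ and $N$ contains no uncountable $B$-indiscernible sequence. The second condition is immediate: since $A\subseteq B$, any $B$-indiscernible sequence is a fortiori $A$-indiscernible, and $M$, being prime over $A$, contains no uncountable $A$-indiscernible sequence. So the whole weight of (i) falls on showing that $M$ is atomic over $B$.

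Fix a finite tuple $\bar c$ from $M$; I must show that $tp(\bar c/B)$ is isolated. For each finite $\bar b\subseteq B$, atomicity of $M$ over $A$ gives that $tp(\bar c\bar b/A)$ is isolated, and a standard transfer argument (if $\theta(\bar x,\bar y)$ isolates $tp(\bar c\bar b/A)$, then $\theta(\bar x,\bar b)$ isolates $tp(\bar c/A\bar b)$, since any realization of $\theta(\bar x,\bar b)$ has the same type over $A\bar b$ as $\bar c$) shows that $tp(\bar c/A\bar b)$ is isolated. The real problem is to pass from ``isolated over $A\bar b$ for every finite $\bar b$'' to ``isolated over $B$'' by a single formula. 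Here I would minimize: choose a finite $\bar b_0\subseteq B$ making the pair $(RM(\bar c/A\bar b_0),\,dM(\bar c/A\bar b_0))$ lexicographically least (possible as this ordinal-valued pair is non-increasing in $\bar b_0$). Then for every finite $\bar b'\subseteq B$ containing $\bar b_0$ the type $tp(\bar c/A\bar b')$ is the unique rank-preserving (nonforking) extension of $q:=tp(\bar c/A\bar b_0)$, and hence $tp(\bar c/B)$ is the unique nonforking extension of $q$ over $B$.

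The main obstacle, and essentially the only place where normality is used, is that the isolating formula $\theta(\bar x,\bar b_0)$ of $q$ does not by itself isolate $tp(\bar c/B)$: all of its realizations realize $q$ over $A\bar b_0$, but some of them fork over $A\bar b_0$ once further parameters from $B$ are adjoined, and these ``impostors'' must be excluded by a single formula over $B$. To handle this I would arrange $q$ to be stationary, so that its unique nonforking extension is definable, and then convert definability of that nonforking extension into an isolating formula of the shape $\theta(\bar x,\bar b_0)\wedge\rho(\bar x,\bar b_0)$, with $\rho$ pinning down the Morley rank and thereby ruling out the forking realizations. Normality of $B$ is exactly what makes such a $\bar b_0$ (equivalently, the relevant canonical parameters) available inside $B$: since $B$ is setwise $Aut(M/A)$-invariant it contains all the $A$-conjugates of its finite tuples, so the finitely many imaginaries needed to witness stationarity and definability lie over $B$ and not merely over the monster. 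This is the step I expect to be the most delicate, and where the hypothesis is indispensable.

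For (ii), one inclusion is trivial: each $\sigma\in Aut(M/A)$ satisfies $\sigma(B)=B$ by normality, so $\sigma|_B$ is a bijection of $B$ fixing $A$ pointwise and preserving all formulas, i.e. an $A$-elementary permutation of $B$. For the converse, let $g$ be an $A$-elementary permutation of $B$; then $g\colon B\to B\subseteq M$ is an elementary bijection fixing $A$. By (i), $M$ is prime over $B$, so I would invoke the standard extension property of prime models: the elementary map $g$ (and likewise $g^{-1}$) extends to an elementary self-embedding $\sigma$ (respectively $\tau$) of $M$. Since the images $\sigma(M)$ and $\tau(M)$ are elementary substructures of $M$ containing $B$ and are themselves prime over $B$, minimality of the prime model forces $\sigma(M)=\tau(M)=M$, so $\sigma$ is an automorphism. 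As $\sigma$ fixes $A$ and restricts to $g$ on $B$, it exhibits $g$ as induced by an element of $Aut(M/A)$. Hence the group of permutations of $B$ induced by $Aut(M/A)$ coincides with the group of $A$-elementary permutations of $B$.
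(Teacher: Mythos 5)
The paper treats (i) as standard (citing Poizat, \S 18.2) and proves only (ii), so the substantive comparison is with your argument for (ii); there you follow essentially the paper's route but with one genuinely false step. You write that ``minimality of the prime model forces $\sigma(M)=M$''. Prime models over a set are in general not minimal: for the theory of an infinite pure set the countable model is prime over $\emptyset$ yet has proper elementary substructures, so an elementary self-embedding of $M$ extending $g$ need not be onto. What is true, and what the paper uses, is \emph{uniqueness}: $\sigma(M)\preceq M$ is prime over $B$, as is $M$ itself, so there is an isomorphism $f\colon \sigma(M)\to M$ fixing $B$ pointwise, and $f\circ\sigma$ is the desired automorphism of $M$ over $A$ extending $g$. (The paper phrases this by first extending $g$ to an automorphism $\tau$ of the monster and comparing $\tau(M)$ with $M$, both prime over $B$; either version works once ``minimality'' is replaced by ``uniqueness''.)

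Your sketch of (i) correctly identifies the hard point (atomicity of $M$ over $B$) and the right preliminary moves (minimizing the pair $(RM,dM)$ of $tp(\bar c/A\bar b_0)$ over finite $\bar b_0\subseteq B$), but the step you yourself flag as delicate is left open, and the device you propose for closing it --- a conjunct $\rho(\bar x,\bar b_0)$ ``pinning down the Morley rank'' so as to exclude the forking realizations --- is not available: ``$\bar x$ forks with some tuple of $B$ over $A\bar b_0$'' is not expressible by a single first-order formula over $A\bar b_0$. The classical argument shows instead that no extra conjunct is needed: the formula $\theta(\bar x,\bar b_0)$ isolating $tp(\bar c/A\bar b_0)$ already isolates $tp(\bar c/B)$. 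Indeed, if some type over $A\bar b_0\bar b$ (with $\bar b\subseteq B$ finite) containing $\theta$ had smaller $(RM,dM)$, then a formula witnessing the drop is realized in $M$ by some $\bar c''$ with $tp(\bar c''/A\bar b_0)=tp(\bar c/A\bar b_0)$; strong $\omega$-homogeneity gives $\sigma\in Aut(M/A)$ with $\sigma(\bar c''\bar b_0)=\bar c\bar b_0$, and normality puts $\sigma(\bar b)$ back inside $B$, contradicting minimality. So normality enters via conjugation inside $M$, not via the availability in $B$ of canonical parameters as you suggest; that part of your plan would need to be replaced rather than merely elaborated.
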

\begin{proof}
We just mention (ii). It is enough to see that any $A$-elementary permutation $\sigma$  of $B$ extends to an $A$-automorphism $\sigma'$ of $M$.
First $\sigma$ extends to an $A$-automorphism of ${\bar M}$. Let $N = \tau(M)$. So (using (i)) both $N$ and $M$ are prime over $B$, so there is a an isomorphism $f:N\to M$ fixing $B$ pointwise. But then  $f\circ \tau$ is an automorphism of $M$ extending $\sigma$. 

\end{proof} 

We now want to point out that in the above context, the material in Sections 1 and 2 generalizes to $Aut_{M}(B/A)$.

So again we assume $M$ is prime over $A$ and $B\subseteq M$ is normal over $A$.
\begin{Definition} As  as in Definition 1.1, by a {\em definable} (in $M$) subset of $Aut_{M}(B/A)$ we mean something of the form $\{\sigma\in Aut_{M}(B/A): M\models \phi(\sigma({\bar b}), {\bar b})\}$ for some formula $\phi({\bar x}, {\bar y})\in L_{A}$ and tuple ${\bar b}$ from $B$. 
\end{Definition}

Let now ${\bar b}$ be an enumeration of $B$ and $p_{0} = tp({\bar b}/A)$. $S_{p_{0}}(M)$ is the space of complete types over $M$ extending $p_{0}$. 
The material from Sections 1 and 2 goes through with similar  proofs. 

\begin{Proposition}
(i) The collection $Def(Aut_{M}(B/A))$ of definable subsets of $Aut_{M}(B/A)$ is a Boolean algebra.
\newline
(ii)  This Boolean algebra is canonically isomorphic to the Boolean algebra of clopens of $S_{p_{0}}(M)$.
\newline
(iii) There is a unique $Aut_{M}(B/A)$-invariant finitely additive probability measure on $Def(Aut_{M}(B/A))$  (the unique Keisler measure on $Aut_{M}(B/A)$). 
\end{Proposition}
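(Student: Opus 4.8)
The plan is to run the three parts in parallel with Remark 1.2, Lemma 2.3, and the proof of Theorem 2.1(i) respectively, the one genuinely new feature being that a formula over $M$ may now have parameters outside $B$. Part (i) is immediate: the class of sets $\{\sigma\in Aut_{M}(B/A): M\models\phi(\sigma({\bar b}),{\bar b})\}$ is closed under complementation (replace $\phi$ by $\neg\phi$) and under finite intersection and union (conjoin or disjoin the formulas, concatenating the parameter tuples ${\bar b}$), exactly as in the proof of Remark 1.2; so $Def(Aut_{M}(B/A))$ is a Boolean algebra, and by the same argument each of its members is clopen in the pointwise topology.

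For (ii), I first record that by Lemma 2.6(i) $M$ is atomic over $B$, which lets me absorb arbitrary parameters: if $\psi({\bar x}',{\bar c})$ is a formula over $M$ with ${\bar c}$ from $M$ and ${\bar b}'$ the finite subtuple of ${\bar b}$ matching ${\bar x}'$, I choose $\chi({\bar z},{\bar b}'')$ over $B$ isolating $tp({\bar c}/B)$; then for $\sigma\in Aut_{M}(B/A)$ one has $M\models\psi(\sigma({\bar b}'),{\bar c})$ iff $M\models\exists{\bar z}(\chi({\bar z},{\bar b}'')\wedge\psi(\sigma({\bar b}'),{\bar z}))$, the right-hand side being over $A$ with parameters from $B$. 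Hence the definable subsets of $Aut_{M}(B/A)$ are unchanged if in Definition 2.7 we permit parameters from $M$, and I may define $f$ from the clopens of $S_{p_{0}}(M)$ to $Def(Aut_{M}(B/A))$ exactly as in Definition 2.2, sending the clopen cut out by $\phi({\bar x}',{\bar c})$ to $\{\sigma: M\models\phi(\sigma({\bar b}'),{\bar c})\}$. That $f$ is a well-defined Boolean homomorphism is routine; the content, as in Lemma 2.3, is that $f(X)\neq\emptyset$ whenever $X\neq\emptyset$. For this I take $p\in S_{p_{0}}(M)$ containing $\phi({\bar x}',{\bar c})$ and ${\bar n}\models p$; the corresponding subtuple ${\bar n}'$ satisfies ${\bar n}'\equiv_{A}{\bar b}'$, and letting $\theta({\bar x}')$ isolate $tp({\bar b}'/A)$ (atomicity over $A$) the sentence $\exists{\bar x}'(\theta({\bar x}')\wedge\phi({\bar x}',{\bar c}))$ holds in ${\bar M}$, hence in $M$. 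A witness ${\bar b}''\in M$ realizes $\theta$, so ${\bar b}''\equiv_{A}{\bar b}'$ and, crucially, ${\bar b}''\in B$ by normality of $B$ over $A$ (Definition 2.5). Strong $\omega$-homogeneity then gives $\tau\in Aut(M/A)$ with $\tau({\bar b}')={\bar b}''$, and its restriction $\sigma=\tau|_{B}\in Aut_{M}(B/A)$ by Lemma 2.6(ii) lies in $f(X)$.

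For (iii) I transport the problem along $f$: a finitely additive probability measure on $Def(Aut_{M}(B/A))$ is the same as one on the clopens of $S_{p_{0}}(M)$, that is, a Keisler measure $\mu({\bar x})$ over $M$ extending $p_{0}=tp({\bar b}/A)$. The point to check is that invariance matches up correctly. For $\tau\in Aut(M/A)$ with restriction ${\bar\tau}=\tau|_{B}$ a direct computation gives $f(\tau\cdot X)={\bar\tau}\cdot f(X)$, and since every element of $Aut_{M}(B/A)$ arises as such a restriction (Lemma 2.6(ii)), a measure on $Def(Aut_{M}(B/A))$ is $Aut_{M}(B/A)$-invariant iff the corresponding measure on $S_{p_{0}}(M)$ is $Aut(M/A)$-invariant. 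Now the construction via the sets $P_{\Delta}$ and the uniqueness argument in the proof of Theorem 2.1(i) used only that $M$ is atomic and strongly $\omega$-homogeneous over $A$ and that $T$ is stable, applied to the type $p_{0}$ of a fixed tuple over $A$; nowhere was it used that this tuple enumerates all of $M$. Since $T$ totally transcendental is in particular stable, that argument applies verbatim to $p_{0}=tp({\bar b}/A)$, producing the unique $Aut(M/A)$-invariant such $\mu$, hence the unique invariant Keisler measure on $Aut_{M}(B/A)$.

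I expect the main obstacle to be the bookkeeping that keeps (ii) and (iii) honest rather than any deep difficulty: verifying that the parameter-reduction via atomicity of $M$ over $B$ really returns a set definable with parameters in $B$ (so that $f$ genuinely lands in $Def(Aut_{M}(B/A))$), and verifying the intertwining identity $f(\tau\cdot X)={\bar\tau}\cdot f(X)$ so that the two invariance notions coincide and the proof of Theorem 2.1(i) can be quoted without change. Normality of $B$ over $A$ is precisely what makes the nonemptiness step in (ii) go through, and is the hypothesis that cannot be dropped.
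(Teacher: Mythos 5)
Your proof is correct, and its overall architecture (transport everything to the clopen algebra of $S_{p_{0}}(M)$ via an analogue of Lemma 2.3, then rerun the existence and uniqueness argument of Theorem 2.1(i) for the type $p_{0}=tp({\bar b}/A)$, noting that nothing there required the tuple to enumerate all of $M$) is the same as the paper's. The one step the paper actually writes out in detail --- reducing a clopen of $S_{p_{0}}(M)$ given by a formula $\phi({\bar x}',{\bar c})$ with parameters ${\bar c}$ possibly outside $B$ to one given by a formula with parameters in $B$ --- is where you genuinely diverge: the paper invokes definability of $tp({\bar c}/B)$ over a finite subset of $B$ (using that $T$ is totally transcendental) and replaces $\phi({\bar x}',{\bar c})$ by the corresponding $\phi$-definition $\chi({\bar x}',b_{1},\dots,b_{n})$, whereas you use atomicity of $M$ over $B$ (from Lemma 2.6(i)) to isolate $tp({\bar c}/B)$ and absorb ${\bar c}$ behind an existential quantifier. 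Your version buys a slightly more robust reduction: the equivalence you need is checked only at tuples $\sigma({\bar b}')$ lying in $B$, where isolation of $tp({\bar c}/B)$ does all the work, so you avoid having to worry about how the defining schema of $tp({\bar c}/B)$ behaves on realizations of $\psi$ outside $B$; the cost is that your reduction is tailored to elements of $Aut_{M}(B/A)$ rather than producing one formula over $B$ that literally cuts out the same clopen of $S_{p_{0}}(M)$, so in (ii) you (correctly) keep $f$ defined on clopens with arbitrary parameters and verify separately that its image lands in $Def(Aut_{M}(B/A))$. Both arguments lean on normality of $B$ in exactly the same place (realizations in $M$ of the isolating formula over $A$ lie in $B$), and your explicit treatment of the intertwining $f(\tau\cdot X)=\bar{\tau}\cdot f(X)$ together with surjectivity of restriction (Lemma 2.6(ii)) is precisely what justifies the paper's unstated identification of the two invariance notions in (iii).
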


We should just say something about (ii) because on the face of it,  a clopen subset of $S_{p_{0}}(M)$ is given by a formula $\phi({\bar x}, c)$ say where $c$ is a tuple from $M$.  Without loss of generality ${\bar x} = (x_{1},..,x_{n})$ corresponds to $(b_{1},..,b_{n})$ from $B$. Let
Let $\psi(x_{1},..,x_{n})$ isolate $tp(b_{1},..,b_{n}/A)$. 
As  $T$ is  $t.t.$, $tp(c/B)$ is definable over a finite set, which without loss of generality is also $(b_{1},..,b_{n})$.  So there is a formula $\chi(x_{1},..,x_{n}, b_{1},..,b_{n})$ such that for $d_{1},..,d_{n}\in B$,   $M\models \chi(d_{1},..,d_{n}, b_{1},..,b_{n})$ iff $M\models \phi(d_{1},..,d_{n},c)\wedge \psi(d_{1},..,d_{n})$.  We may assume that $\chi(x_{1},..,x_{n}, d_{1},..,d_{n})$ implies $\psi(x_{1},..,x_{n})$.  Noting that (by normality) all realizations of $\psi$ in $M$ are in $B$, we see that the clopen of $S_{p_{0}}(M)$ given by $\phi(x_{1},..,x_{n},c)$ coincides with that given  by $\chi(x_{1},..,x_{n}, b_{1},..,b_{n})$.  Hence, as in Definition 2.2, we can define a map $f$ from the set of clopens of $S_{p_{0}}(M)$ to the set of definable subsets of $Aut_{M}(B/A)$: the clopen $\chi(x_{1},..,x_{n}, b_{1},..,b_{n})$ of $S_{p_{0}}(M)$ is taken by $f$ to $\{\sigma\in Aut_{M}(B/A): M\models \chi(\sigma(b_{1}),..,\sigma(b_{n}), b_{1},..,b_{n})\}$

\section{Observations and  compatibilities}
We will make some more observations about definability in the automorphism groups we have been considering.

\subsection{Definable subgroups}
Let $M$ be atomic and strongly $\omega$-homogeneous over $A$, and let ${\mathcal G} = Aut(M/A)$.  We here point out that the definable subgroups of ${\mathcal G}$ are precisely of the form $Fix(e)$ where $e\in M^{eq}$.

\begin{Lemma} Let $\phi({\bar x}, {\bar y})$ be an $L_{A}$ formula, and ${\bar b}$ be a tuple in $M$ such that $\phi({\bar x}, {\bar b})$ defines a subgroup $H$ of ${\mathcal G}$, i.e. 
$\{\sigma\in {\mathcal G}: M\models \phi(\sigma({\bar b}), {\bar b})\}$ is a subgroup of ${\mathcal G}$. Let $\psi({\bar x})$ isolate $tp({\bar b}/A)$. Then $\phi({\bar x}, {\bar y})$ defines an equivalence relation on the set of realizations of $\psi$  (in $M$ so also in ${\bar M}$). 
\end{Lemma}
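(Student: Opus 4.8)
The plan is to set up a dictionary between the relation defined by $\phi$ on $\mathcal{G}$-conjugates of ${\bar b}$ and membership in $H$, and then to read off reflexivity, symmetry and transitivity directly from the facts that $H$ contains the identity and is closed under inverses and products. The central identity I would establish first is that for all $\sigma,\tau\in{\mathcal G}$,
$$ M\models\phi(\sigma({\bar b}),\tau({\bar b}))\iff \tau^{-1}\sigma\in H. $$
This is immediate: applying the $A$-automorphism $\tau^{-1}$ to $M\models\phi(\sigma({\bar b}),\tau({\bar b}))$ and using $\phi\in L_{A}$ gives $M\models\phi(\tau^{-1}\sigma({\bar b}),{\bar b})$, which by definition of $H$ is exactly $\tau^{-1}\sigma\in H$; the converse follows by applying $\tau$.

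Next I would use strong $\omega$-homogeneity together with the fact that $\psi$ isolates $tp({\bar b}/A)$: any realization of $\psi$ in $M$ has the same type over $A$ as ${\bar b}$, hence is of the form $\sigma({\bar b})$ for some $\sigma\in{\mathcal G}$. Thus an arbitrary pair $({\bar c},{\bar d})$ of realizations of $\psi$ in $M$ may be written $(\sigma({\bar b}),\tau({\bar b}))$, and by the dictionary $\phi({\bar c},{\bar d})$ holds iff $\tau^{-1}\sigma\in H$. The three axioms then drop out: reflexivity from $\mathrm{id}\in H$ (take $\sigma=\tau$); symmetry because $\tau^{-1}\sigma\in H$ forces $(\tau^{-1}\sigma)^{-1}=\sigma^{-1}\tau\in H$, i.e. $\phi({\bar d},{\bar c})$; and transitivity because if ${\bar e}=\rho({\bar b})$ with $\tau^{-1}\sigma\in H$ and $\rho^{-1}\tau\in H$, then $\rho^{-1}\sigma=(\rho^{-1}\tau)(\tau^{-1}\sigma)\in H$, giving $\phi({\bar c},{\bar e})$.

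This shows $\phi$ defines an equivalence relation on the set of realizations of $\psi$ in $M$. To obtain the same conclusion in ${\bar M}$, I would note that reflexivity, symmetry and transitivity of $\phi$ restricted to $\psi$ are each expressible by a single $L_{A}$-sentence (all parameters lie in $A$); since these sentences hold in $M$ and $M\preceq{\bar M}$, they transfer to ${\bar M}$, so no separate orbit analysis over ${\bar M}$ is needed.

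The proof is short once the dictionary is in place, so the only real point of care is bookkeeping in that identity — applying the automorphism on the correct side so that closure of $H$ under \emph{products} yields transitivity rather than a weaker composition statement. After the identity $\phi(\sigma({\bar b}),\tau({\bar b}))\iff\tau^{-1}\sigma\in H$ is pinned down, each equivalence-relation axiom corresponds mechanically to one of the subgroup axioms, and the reduction to group elements via strong $\omega$-homogeneity is what lets the correspondence range over all of $\psi(M)$ rather than just the base tuple ${\bar b}$.
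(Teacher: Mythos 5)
Your proof is correct and follows essentially the same route as the paper: both arguments use strong $\omega$-homogeneity plus isolation of $tp(\bar b/A)$ to write every realization of $\psi$ in $M$ as $\sigma(\bar b)$ for some $\sigma\in\mathcal G$, and then translate reflexivity, symmetry and transitivity into the subgroup axioms for $H$, transferring to $\bar M$ by elementarity of the resulting $L_A$-sentences. Your explicit dictionary $\phi(\sigma(\bar b),\tau(\bar b))\iff\tau^{-1}\sigma\in H$ is just a cleaner packaging of the automorphism-application steps the paper carries out separately for each axiom.
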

\begin{proof} (i) Reflexivity: As the identity is in $H$, $M\models \phi({\bar b}, {\bar b})$ whereby $M\models \forall {\bar x}(\psi({\bar x}) \to \phi({\bar x},
{\bar x}))$. 
\newline
(ii) Symmetry:    Let ${\bar c}$ realize  $\psi({\bar x})\wedge \phi({\bar x}, {\bar b})$ in $M$. So (strong $\omega$-homogeneity) there is $\sigma\in {\mathcal G}$ such that $
\sigma({\bar b}) = {\bar c}$.  As $\sigma^{-1}\in {\mathcal G}$ we have $M\models \phi(\sigma^{-1}({\bar b}), {\bar b})$. Applying $\sigma$ we obtain $M\models \phi({\bar b}, {\bar 
c})$.  We have shown that $M\models \forall{\bar x}(\psi({\bar x})\wedge \phi({\bar x}, {\bar b})\to \phi({\bar b}, {\bar x}))$. As $\psi$ isolates $tp({\bar b}/A)$ we conclude
that $M\models \forall {\bar y}\forall {\bar x}(\psi({\bar y})\wedge \psi({\bar x})\wedge \phi({\bar x}, {\bar y}) \to \phi({\bar y}, {\bar x}))$.
\newline
(iii)  Transitivity: Given that we have symmetry, and the isolation of $tp({\bar b}/A)$ by $\psi$, it suffices to prove that if ${\bar c}$, ${\bar d}$ realise $\psi$ in $M$, and 
$M\models \phi({\bar c}, {\bar b})$ and $M\models \phi({\bar d},{\bar b})$ then $M\models \phi({\bar d}, {\bar c})$.  Let $\sigma, \tau\in {\mathcal G}$ be such that $\sigma({\bar b}) = {\bar c}$ and $\tau({\bar b}) = {\bar d}$.  So $\sigma$ and $\tau$ are in $H$, whereby the composition $\sigma\tau \in H$. 
Then $M\models \phi(\sigma\tau({\bar b}), {\bar b})$, namely  $M\models \phi(\sigma({\bar d}), {\bar b})$. Apply $\sigma^{-1}$ to get $M\models\phi({\bar d}, {\bar c})$ as required.
\end{proof} 

We conclude:
\begin{Proposition} The definable subgroups of $Aut(M/A)$ are precisely the subgroups $Fix(e)$ for $e\in M^{eq}$, namely $\{\sigma\in {\mathcal G}:\sigma(e) = e\}$
\end{Proposition}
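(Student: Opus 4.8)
The plan is to prove the two inclusions separately, using Lemma 3.1 for the harder direction. First I would dispose of the easy inclusion: for any $e\in M^{eq}$, the subgroup $Fix(e)=\{\sigma\in\mathcal G:\sigma(e)=e\}$ is definable. Here $e$ is an imaginary, so $e=\bar c/E$ for some $\emptyset$-definable (or $A$-definable) equivalence relation $E$ and some real tuple $\bar c$ from $M$; then $\sigma(e)=e$ is equivalent to $M\models E(\sigma(\bar c),\bar c)$, which is exactly a condition of the form $\phi(\sigma(\bar c),\bar c)$ with $\phi(\bar x,\bar y)$ the formula $E(\bar x,\bar y)\in L_A$. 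Hence $Fix(e)\in Def(\mathcal G)$, and it is visibly a subgroup.

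For the reverse inclusion, suppose $H=\{\sigma\in\mathcal G:M\models\phi(\sigma(\bar b),\bar b)\}$ is a definable subgroup, and let $\psi(\bar x)$ isolate $tp(\bar b/A)$. By Lemma 3.1, $\phi(\bar x,\bar y)$ defines an equivalence relation $E$ on the realizations of $\psi$. The natural candidate is to set $e=\bar b/E\in M^{eq}$ and argue $H=Fix(e)$. Concretely, $\sigma\in Fix(e)$ iff $M\models E(\sigma(\bar b),\bar b)$, i.e. iff $\phi(\sigma(\bar b),\bar b)$ holds (once we know $\sigma(\bar b)$ and $\bar b$ both realize $\psi$, which they do since $\sigma$ fixes $A$ and $\psi$ is over $A$), which is precisely the defining condition for $H$. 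So $H=Fix(e)$ with $e=\bar b/E$.

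The one point requiring care is that Lemma 3.1 gives an equivalence relation on the $\psi$-locus, not on all tuples of the right length; to form a clean imaginary $e=\bar b/E$ one wants an honest $A$-definable equivalence relation. I would handle this by extending $E$ to a genuine $A$-definable equivalence relation on the full sort: define $E'(\bar x,\bar y)$ to be $(\psi(\bar x)\wedge\psi(\bar y)\wedge\phi(\bar x,\bar y))\ \vee\ (\neg\psi(\bar x)\wedge\neg\psi(\bar y))$, so that all non-realizations of $\psi$ collapse into a single class and $E'$ agrees with $\phi$ on the $\psi$-locus. Reflexivity, symmetry, and transitivity of $E'$ follow from Lemma 3.1 together with the trivial behaviour off the locus, and $E'$ is manifestly over $A$. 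Then $e=\bar b/E'\in M^{eq}$ and $Fix(e)=H$ exactly as above, since $\sigma(\bar b)$ always realizes $\psi$.

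I expect the main obstacle to be precisely this bookkeeping — verifying transitivity of $E'$ across the $\psi$/non-$\psi$ boundary and confirming that the resulting imaginary genuinely lives in $M^{eq}$ (rather than only in $\bar M^{eq}$). The latter holds because $\bar b\in M$ and $E'$ is $A$-definable with $A\subseteq M$, so $\bar b/E'$ is coded in $M^{eq}$; but it is worth stating explicitly. The algebraic content is entirely supplied by Lemma 3.1, so beyond this the argument is a direct translation between ``$\sigma$ fixes the $E'$-class of $\bar b$'' and ``$\phi(\sigma(\bar b),\bar b)$ holds.''
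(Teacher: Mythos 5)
Your proof is correct and follows essentially the same route as the paper: the easy direction via the formula defining the equivalence relation, and the hard direction by applying Lemma 3.1 and taking $e=\bar b/E$. Your explicit extension of $E$ to a total $A$-definable equivalence relation $E'$ is exactly the ``standard methods'' step the paper leaves as a parenthetical remark (the paper additionally notes one can pass to a $\emptyset$-definable relation by absorbing the $A$-parameters).
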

\begin{proof} Let $H$ be a definable subgroup of the form $\{\sigma\in {\mathcal G}: M\models \phi(\sigma({\bar b}), {\bar b})\}$  (for $\phi({\bar x}, {\bar y})\in L_{A}$ and ${\bar b}$ a finite tuple from $M$).  Lemma 3.1 above says that $\phi$ defines an equivalence relation $E$ say on realizations of $\psi$ where $\psi$ isolates $tp({\bar b}/A)$.  Of course ${\mathcal G}$ preserves $E$ and by definition $H$ is the subgroup of ${\mathcal G}$ which fixes the $E$-class of ${\bar b}$. So take $e$ to be ${\bar b}/E$.   (On the face of it $E$ is $A$-definable, but standard methods give us a $\emptyset$-definable equivalence relation.)
The converse, that $Fix(e)$ is definable, is immediate by considering the formula defining $E$. 
\end{proof} 

\begin{Remark} (i) Consider the special case where  $T$ is $\omega$-categorical, $A = \emptyset$ and $M$ is the unique countable model of $T$. It is well-known that the subgroups $Fix(e)$ are {\em  precisely} the open subgroups of $Aut(M)$. So open subgroups and definable  subgroups coincide. On the other hand there are more clopen subsets of $Aut(M)$ than definable subsets. 
\newline
(ii) Assuming that $T$ is $t.t$, $M$ is prime over $A$, and $A\subseteq B \subseteq M$ with $B$ normal,  Proposition 3.2 extends to definable subgroups of $Aut_{M}(B/A)$. Namely definable subgroups of $Aut_{M}(B/A)$ are of the form $Fix(e)$ for some imaginary in $dcl^{eq}(B)$. 
\newline
(ii) In a future paper we will use the results above to yield  Galois correspondences in various contexts. 
\end{Remark}

\subsection{Definable Galois cohomology}
In  \cite{Pillay-Galois} we developed ``Galois cohomology" in a model-theoretic setting.  This generalized  Kolchin's constrained cohomology (Chapter VII of \cite{Kolchin-DAG})  which in turn extended classical Galois cohomology \cite{Serre}.  The general idea is to classify principal homogeneous spaces for groups (all ``defined over" some base set or field) in various categories, such as algebraic-geometric, differential algebraic-geometric, and definable. 

As in \cite{Pillay-Galois} we work in the context of $T$ an arbitrary theory, $M$ being atomic and strongly $\omega$-homogeneous over $A$.


Let ${\mathcal G}$ be $Aut(M/A)$, and let $G$ be a group definable with parameters from  $A$  in the structure $M$. Note that ${\mathcal G}$ acts on $G$ (on the left say). The key notion in \cite{Pillay-Galois} was that of a {\em definable cocycle} $f$ from ${\mathcal G}$ to $G$.  The  cocycle condition simply means that $f:{\mathcal G} \to G$ satisfies $f(\sigma\cdot \tau) =  f(\sigma)\cdot\sigma(f(\tau))$, for any $\sigma, \tau$ in ${\mathcal G}$. 
The definability condition as given in \cite{Pillay-Galois} is that there is some $A$-definable function $h(-,-)$ and  some tuple ${\bar b}$ from $M$ such that for any $\sigma\in {\mathcal G}$, $f(\sigma)  = h(\sigma({\bar b}), {\bar b})$. 

As explained in \cite{Pillay-Galois} this specializes to classical Galois cohomology, taking $T = ACF$, and  $M = acl(A)$, and ``constrained cohomology" taking $T = DCF_{0}$ and $M$ the differential closure of $A$ (prime model over $A$). 

The main point to make here is that the notion of a definable map from ${\mathcal G}$ to $G$ can  be seen to be a special case of the definitions in Section 1, suitably extended.

\begin{Definition} (With earlier notation, and assumptions.)
Let $X$ be a set definable with parameters from $A$, in the structure $M$. Let ${\bar m}$ be an enumeration of $M$ and let ${\bar x}, {\bar y}$ be tuples of variables with corresponding enumerations. 
By a definable subset of $Aut(M/A) \times X$, we mean $\{(\sigma, c)\in Aut(M/A)\times X: M\models \phi(\sigma({\bar m}), {\bar m},  c)\}$ for some formula $\phi({\bar x}, {\bar y}, z)$ over $A$. 
\end{Definition}  

\begin{Lemma} (${\mathcal G} = Aut(M/A)$.)  Let again $X$ be a set definable in $M$ over $A$. Let $f:{\mathcal G}\to X$ be a function. Then the following are equivalent:
\newline
(i) the graph of $f$ is a definable subset of ${\mathcal G}\times X$ in the sense of Definition 3.4.
\newline
(ii) There is finite tuple ${\bar b}$ from $M_{A}$ and an $A$-definable (partial) function $h(-,-)$ such that for any $\sigma\in {\mathcal G}$, $f(\sigma) = h(\sigma({\bar b}), {\bar b})$. 
\end{Lemma}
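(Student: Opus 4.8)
The plan is to prove the two implications separately. The direction (ii)$\Rightarrow$(i) will be a direct translation, while (i)$\Rightarrow$(ii) will require extracting a genuine partial function from a formula that a priori only defines the graph of $f$ as a subset of $\mathcal{G}\times X$.

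For (ii)$\Rightarrow$(i), I would assume $f(\sigma)=h(\sigma(\bar b),\bar b)$ for an $A$-definable partial function $h$ and a finite tuple $\bar b$ from $M$. Let $\theta(\bar u,\bar v,z)$ be the $A$-formula defining the graph of $h$, view $\bar b$ as a subtuple $(m_{\alpha_1},\dots,m_{\alpha_n})$ of the enumeration $\bar m$, and substitute the matching coordinates of $\bar x,\bar y$ for $\bar u,\bar v$ (padding with dummy variables). This yields an $A$-formula $\phi(\bar x,\bar y,z)$ for which $M\models\phi(\sigma(\bar m),\bar m,c)$ holds exactly when $c=h(\sigma(\bar b),\bar b)=f(\sigma)$, so the set defined by $\phi$ in the sense of Definition 3.4 is precisely the graph of $f$.

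For (i)$\Rightarrow$(ii), suppose the graph of $f$ equals $\{(\sigma,c):M\models\phi(\sigma(\bar m),\bar m,c)\}$ for some $A$-formula $\phi(\bar x,\bar y,z)$. As in the discussion preceding Definition 2.2, only finitely many variables occur in $\phi$, so after adding dummies I would take $\bar b=(m_{\alpha_1},\dots,m_{\alpha_n})$ finite and rewrite the condition as $M\models\phi(\sigma(\bar b),\bar b,c)$; I would also conjoin the $A$-formula defining $X$, so that $\phi$ forces $z\in X$. The crucial observation is that $\phi(\bar x,\bar b,z)$ defines a partial function of $z$ in $\bar x$ on the realizations of $tp(\bar b/A)$ in $M$. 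Indeed, since the displayed set is the graph of the function $f$, for each $\sigma$ there is a unique $c$ --- namely $f(\sigma)$ --- with $M\models\phi(\sigma(\bar b),\bar b,c)$; and by strong $\omega$-homogeneity of $M$ over $A$, as $\sigma$ ranges over $\mathcal{G}$ the tuple $\sigma(\bar b)$ ranges over all realizations of $tp(\bar b/A)$ in $M$. Hence for every $\bar a\in M$ realizing $tp(\bar b/A)$ there is exactly one $c$ with $M\models\phi(\bar a,\bar b,c)$. To finish, I would package this as the $A$-definable partial function $h$ whose graph is given by
$$h(\bar x,\bar y)=z\ \Longleftrightarrow\ \phi(\bar x,\bar y,z)\wedge\forall z'\,(\phi(\bar x,\bar y,z')\to z'=z).$$
This relation is over $A$ and is single-valued by construction, and by the previous paragraph $h(\bar a,\bar b)$ is defined for every realization $\bar a$ of $tp(\bar b/A)$, so $h(\sigma(\bar b),\bar b)=f(\sigma)$ for all $\sigma\in\mathcal{G}$, which is (ii).

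The main obstacle is exactly this extraction step. A priori $\phi$ defines only the graph of $f$ inside $\mathcal{G}\times X$ and may be multivalued for arbitrary arguments; the conjunction above always cuts it down to a single-valued relation (this is why conjoining the defining formula of $X$ matters, to secure uniqueness over all of $M$ and not merely over $X$). What guarantees that this relation is actually defined, and equal to $f$, at $(\sigma(\bar b),\bar b)$ is the interplay of $f$ being single-valued --- forcing a unique witness $c$ for each $\sigma$ --- with strong $\omega$-homogeneity, which makes the first arguments $\sigma(\bar b)$ range over all realizations of $tp(\bar b/A)$ in $M$, so that uniqueness holds throughout the relevant domain rather than along a single orbit.
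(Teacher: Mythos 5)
Your proof is correct and follows essentially the same route as the paper: the heart of the matter in both is the claim that, by strong $\omega$-homogeneity and single-valuedness of $f$, for every realization $\bar a$ of $tp(\bar b/A)$ in $M$ there is exactly one $c$ with $M\models\phi(\bar a,\bar b,c)$. The only (cosmetic) difference is that you cut $\phi$ down to a partial function with a definite-description conjunct, whereas the paper restricts via the isolating formula $\psi(\bar x)\wedge\psi(\bar y)$; your explicit conjunction of the formula defining $X$ is a sensible extra precaution.
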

\begin{proof} (ii) implies (i).  We may assume that ${\bar b} = (m_{1},..,m_{n})$, and $h$ has variables $x_{1},..,x_{n}, y_{1},..,y_{n}$.
Let $\phi({\bar x}, {\bar y}, z)$ be the formula $z = h({\bar x}, {\bar y})$. Then $\phi$ witnesses that the graph of $f$ is a definable subset of ${\mathcal G}\times X$ (as in Definition 3.4). 
\newline
(i) implies (ii).  So suppose $\phi({\bar x}, {\bar y},z)$ witnesses that the graph of $f$ is definable in the sense of Definition 3.4. We may assume that $m_{1},..,m_{n}$ begin the enumeration of ${\bar m}$ and that $x_{1},..,x_{n},y_{1},...,y_{n}$ are the ${\bar x}$, ${\bar y}$ variables in $\phi$. 
So the graph of $f$, $X$ say,  is $\{(\sigma,c)\in {\mathcal G}\times X: M\models \phi(\sigma(m_{1},..,m_{n}), m_{1},..,m_{n},c)\}$. 

Let $\psi(x_{1},..,x_{n})$ be a formula isolating $tp(m_{1},..,m_{n}/A)$. 
\newline
{\em Claim.} $M \models   \forall {\bar x}(\psi({\bar x}) \to \exists^{=1}z\phi({\bar x}, m_{1},..,m_{n}, z))$
\newline
{\em Proof of claim.}  Let $(m_{1}',...,m_{n}')$ realize $\psi(x_{1},..,x_{n})$ in $M_{A}$. Then there is $\sigma\in {\mathcal G}$ such that $\sigma(m_{1},..,m_{n}) = (m_{1}',..,m_{n}')$.  Then $f(\sigma)\in X$ has to be the unique  $c\in X$ such that  $M\models \phi(m_{1}',...,m_{n}', m_{1},..,m_{n},c)$.  \qed

\vspace{2mm}
\noindent
As $\psi(y_{1},..,y_{n})$ isolates $tp(m_{1},..,m_{n}/A)$ it  follows that 
$M\models \forall {\bar x}\forall{\bar y}(\psi({\bar x})\wedge \psi({\bar y}) \to \exists ^{=1}z\phi({\bar x}, {\bar y}, z))$.
Define $h({\bar x},{\bar y}) = z$ iff $\models\phi({\bar x}, {\bar y}, z) \wedge \psi({\bar x})\wedge \psi({\bar x})$. 
\end{proof}

Lemma 3.5 shows that the two notions of definable map from ${\mathcal G}$ to $X$ coincide. 

\vspace{2mm}
\noindent
Let us just note in passing that two definable cocycles $f_{1}:{\mathcal G} \to G$ and $f_{2}:{\mathcal G}\to G$ are said to  be {\em cohomologous} if for some $b\in G$, for all $\sigma\in {\mathcal G}$, $f_{2}(\sigma) = b^{-1}\cdot f_{1}(\sigma)\cdot\sigma$. $H^{1}_{def}({\mathcal G}, G)$ is the set of equivalence classes (modulo the equivalence relation of being cohomologous).  The main point is that $H^{1}_{def}({\mathcal G}, G)$ classifies the collection of definable right torsors for $G$ defined over $A$, up to $A$-definable isomorphism (commuting with the action of $G$). 

\vspace{5mm}
\noindent
After making the additional assumptions that $T$ is $t.t$ and $M$ is prime over $A$, the above relativises to replacing $M$ by  some normal $B$ wth $A\subseteq B \subseteq M$. So first a subset of $Aut_{M}(B/A)\times X(B)$ is called definable if it is of the form 
$\{(\sigma, c): M\models \phi(\sigma({\bar b}), {\bar b}, c)\}$ for some $\phi$ over $A$. And when $f:Aut_{M}(B/A) \to X(B)$ is a function who graph is definable, we call $f$ a definable function. Again when $X$ is a group $G$ we have definable cocycles $Aut_{M}(B/A) \to G(B)$ and definable cocycles up to being cohomologous. In fact this is the level of generality of Chapter VII, Section 2 of Kolchin's \cite{Kolchin-DAG} in the context of the theory $DCF_{0}$. Putting this part of Kolchin's work into the general model-theoretic setting will appear in a forthcoming paper of David Meretzky.

\vspace{2mm}
\noindent
We also have a natural notion of a definable subset of the Cartesian power ${\mathcal G}^{n}$, namely $\{(\sigma_{1},...,\sigma_{n}): M\models \phi(\sigma_{1}({\bar b}),...,\sigma_{n}({\bar b}), {\bar b}\})$ for some formula $\phi$ over $A$ and finite tuple ${\bar b}$ from $M$.   In a forthcoming paper with Krzysztof Krupinski we describe when the (graph of) the group operation on ${\mathcal G}$ is definable.

\subsection{Differential Galois theory}
We will focus on the Galois theory of linear differential equations, often called the Picard-Vessiot theory, although we could work with more general families of algebraic differential equations and differential field extensions (such as the strongly normal theory or the generalized strongly normal theory) or even work in a fairly abstract model theoretic environment such as in Section 2 of \cite{Leon-Sanchez-Pillay-definablegaloistheory}.

The ambient first order theory is here $DCF_{0}$ the theory of differentially closed fields of characteristic $0$, in the language of rings together with a symbol $\partial$ for the derivation. $DCF_{0}$ is complete with quantifier elimination and elimination of maginaries. It is also $\omega$-stable. 

The Galois theory of linear differential equations has origins in the 19th century but was put on a firm footing (and generalized) by Kolchin in the 20th century. See \cite{Kolchin-book} and also \cite{vdP-Singer}.  There are plenty of references for the model-theoretic point of view on this theory, but let us just mention \cite{Kamensky-Pillay} for more background.

We fix a ``monster model" ${\mathcal U}$. $K, L,...$ etc. denote ``small" differential subfields.  The prime model over $K$ is also known as the differential closure $K^{diff}$ of $K$.  All of differential Galois theory over $K$ takes place inside $K^{diff}$.  We assume that the field $C_{K}$ of constants of $K$ is algebraically closed, from which it follows that $C_{K} = C_{K^{diff}}$. 

We fix $K$. By a homogeneous linear differential equation over $K$ we mean something of the form 
\newline
(*) $\partial Y = AY$,
\newline
 where $Y$ is an $n\times 1$ column vector of indeterminates and $A$ is now an $n\times n$ matrix with coefficients from $K$.  We will fix such an equation, and we let $L$ be the differential field generated by $K$ and the coordinates of all solutions of (*) in $K^{diff}$.  $L$ is called the Picard-Vessiot (or PV) extension of $K$  corresponding to the equation (*).
$L$ is visibly normal, i.e. invariant under $Aut(K^{diff}/K)$.  And by quantifier elimination $Aut_{\partial}(L/K)$ (the group of automorphisms of the differential field $L$ which fix $K$ pointwise) coincides with what we called above
$Aut_{K^{diff}}(L/K)$.  And $C_{K} = C_{L}$. 

Let ${\mathcal V}$ denote the solution set of (*) in $K^{diff}$ (a collection of $1\times n$ vectors). Then ${\mathcal V}$ is an $n$=dimensional  $C_{K}$-vector space. Hence $L$ is generated, as a differential field over $K$ by any basis of ${\mathcal V}$. Let us fix such a basis ${\bar b} = (b_{1},..,b_{n})$, a nonsingular $n\times n$-matrix over $L$. 
For each $\sigma\in Aut_{\partial}(L/K)$, $\sigma({\bar b})$ is also a $C_{K}$-basis of ${\mathcal V}$, hence there is unique $c_{\sigma}\in GL_{n}(C_{K})$ sich that $\sigma({\bar b}) = {\bar b}c_{\sigma}$  (matrix multiplication in $GL_{n}(L))$. 

\begin{Fact} The map $h$ taking $\sigma$ to $c_{\sigma}$ yields an isomorphism between $Aut_{\partial}(L/K)$ and an algebraic (i.e. definable in the algebraically closed field $(C_{K},+,\times)$) subgroup $G$ of $GL_{n}(C_{K})$. 
\end{Fact}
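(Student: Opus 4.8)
The plan is to check in turn that $h$ is a homomorphism, that it is injective, that its image is cut out by a single formula in $\bar b c$, and finally that this image is definable in the pure field $(C_K,+,\times)$ — which is exactly what ``algebraic'' means in the statement. For the first two points I would argue directly. Given $\sigma,\tau\in Aut_{\partial}(L/K)$ we have $\sigma\tau(\bar b)=\sigma(\bar b\, c_{\tau})=\sigma(\bar b)\,\sigma(c_{\tau})$; since the entries of $c_{\tau}$ lie in $C_K=C_L\subseteq K$ and $\sigma$ fixes $K$ pointwise, $\sigma(c_{\tau})=c_{\tau}$, so $\sigma\tau(\bar b)=\bar b\,c_{\sigma}c_{\tau}$ and hence $c_{\sigma\tau}=c_{\sigma}c_{\tau}$. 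Injectivity is immediate: if $c_{\sigma}=I$ then $\sigma$ fixes $\bar b$, and as $L=K\langle\bar b\rangle$ as a differential field, $\sigma$ fixes $L$ pointwise, i.e. $\sigma=\mathrm{id}$.

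Next I would identify the image. Let $\psi(\bar x)$ be a formula over $K$ isolating $tp(\bar b/K)$ (it exists since $K^{diff}$ is atomic over $K$), where $\bar x$ is an $n\times n$ matrix of variables, and we may take $\psi(\bar x)$ to imply $\partial\bar x=A\bar x$ and $\det\bar x\neq 0$. The claim is that $G:=h(Aut_{\partial}(L/K))=\{c\in GL_{n}(C_K):\ \models\psi(\bar b\,c)\}$. For $\subseteq$, each $\sigma\in Aut_{\partial}(L/K)$ preserves $tp(\cdot/K)$ on tuples from $L$ (by quantifier elimination, or because $\sigma$ is the restriction of an element of $Aut(K^{diff}/K)$ via Lemma 2.7), so $\bar b\,c_{\sigma}=\sigma(\bar b)$ realizes $\psi$. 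Conversely, if $c\in GL_{n}(C_K)$ with $\models\psi(\bar b\,c)$, then isolation gives $tp(\bar b\,c/K)=tp(\bar b/K)$; since $\bar b\,c\in L\subseteq K^{diff}$, strong $\omega$-homogeneity of $K^{diff}$ over $K$ yields $\sigma\in Aut(K^{diff}/K)$ with $\sigma(\bar b)=\bar b\,c$, and normality of $L$ over $K$ lets us restrict $\sigma$ to $L$, producing an element of $Aut_{\partial}(L/K)$ with $h$-image $c$. (Incidentally, $\psi(\bar b\,c)$ forces $\partial(\bar b\,c)=A\bar b\,c$, and since $\partial(\bar b)=A\bar b$ and $\bar b$ is invertible this forces $\partial c=0$, so the constancy of $c$ is automatic rather than imposed.)

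The heart of the matter, and the step I expect to be the main obstacle, is showing $G$ is definable in $(C_K,+,\times)$. The difficulty is that $G$ is described by the $DCF_{0}$-formula $\psi(\bar b\,c)$ with parameters $\bar b$ from $L$, whereas algebraicity demands a description over $C_K$ in the pure-field language. Here I would invoke the standard fact that the constant field $\mathcal C$ of $DCF_{0}$ is stably embedded and is a pure algebraically closed field. The trace $D=\{c\in\mathcal C^{\,n^{2}}:\ \models\psi(\bar b\,c)\}$ is then definable in $\mathcal C$, and since $\mathcal C\models ACF_{0}$ eliminates imaginaries its canonical parameter is interdefinable with a tuple $\bar e$ from $\mathcal C$; as $D$ is defined over $K\bar b$, this tuple lies in $\mathcal C\cap dcl(K\bar b)=\mathcal C\cap L=C_L=C_K$. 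Thus a field-formula $\chi(\bar x,\bar e)$ with $\bar e\in C_K$ defines $D$, and intersecting with the $C_K$-points (using $C_K\preceq\mathcal C$ by model completeness of $ACF_{0}$) gives $G=\{c\in GL_{n}(C_K):(C_K,+,\times)\models\chi(c,\bar e)\}$, a subgroup definable in $(C_K,+,\times)$, hence algebraic in the paper's sense. The delicate point throughout is exactly this descent of parameters to $C_K$ via the identity $\mathcal C\cap dcl(K\bar b)=C_K$, which is the only place where the specific structure of $DCF_{0}$ (rather than the abstract framework of Sections 1--2) is genuinely needed; one may finally remark that a constructible subgroup of $GL_{n}$ is automatically Zariski closed, so $G$ is even an honest algebraic matrix group.
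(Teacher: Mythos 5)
The paper does not prove this statement: it is labelled a \emph{Fact} and quoted from the Picard--Vessiot literature (Kolchin, van der Put--Singer, and the model-theoretic treatment in Kamensky--Pillay), so there is no in-paper argument to compare against. Your proof is correct and is essentially the standard model-theoretic derivation of this fact. The three stages are all sound: the homomorphism computation $c_{\sigma\tau}=c_\sigma c_\tau$ (using $C_L=C_K\subseteq K$ so that $\sigma$ fixes $c_\tau$) and injectivity from $L=K\langle\bar b\rangle$; the identification of the image as $\{c\in GL_n(C_K):\models\psi(\bar b\,c)\}$ via atomicity, strong $\omega$-homogeneity of $K^{diff}$ over $K$, and normality of $L$ (this is exactly where the paper's Lemma 2.7 enters, as you note); and the descent of the defining formula to the pure field $(C_K,+,\times)$ via stable embeddedness of the constants, elimination of imaginaries in $ACF_0$, and the computation $\mathcal C\cap dcl(K\bar b)=C_L=C_K$. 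The one step you might make fully explicit is the identity $dcl(K\bar b)=K\langle\bar b\rangle=L$ in $DCF_0$ (a standard consequence of quantifier elimination in characteristic $0$), on which the parameter descent rests; with that spelled out the argument is complete, and your closing remark that a constructible subgroup of $GL_n$ is Zariski closed correctly upgrades ``definable in $(C_K,+,\times)$'' to ``algebraic'' in the classical sense.
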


\begin{Proposition} (i) $h$ induces an isomorphism between the Boolean algebra of definable subsets of $Aut_{\partial}(L/K)$, and the Boolean algebra of definable, in $(C_{K},+,\times)$, subsets of $G$.
\newline
(ii)  Under $h$, the unique $Aut_{\partial}(L/K)$-invariant Keisler measure on $Aut_{\partial}(L/K)$ corresponds to the unique translation invariant Keisler measure on $G$ over $C_{K}$
\end{Proposition}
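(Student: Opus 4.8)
The plan is to establish (i) by chasing definable sets through the bijection $h$ and its inverse, and then to obtain (ii) formally from (i) together with the uniqueness already furnished by Proposition 2.8(iii). Throughout I use the standing setup: $M = K^{diff}$ is prime over $K$, $L$ is normal over $K$, and $Aut_\partial(L/K) = Aut_{K^{diff}}(L/K)$. The essential simplification is that $L = K(\bar b)$ is generated over $K$ by the entries of the fundamental matrix $\bar b$ (the relation $\partial \bar b = A\bar b$ shows $K(\bar b)$ is already closed under $\partial$, so $L = dcl(K\bar b)$); hence every tuple from $L$ is $R(\bar b)$ for a $K$-rational $R$, and every definable subset of $Aut_\partial(L/K)$ may be written $D = \{\sigma : K^{diff}\models \phi(\sigma(\bar b),\bar b)\}$ for one $L_K$-formula $\phi(\bar x,\bar y)$ (absorb $R$ into $\phi$, and conjoin the isolating formula $\psi(\bar x)$ of $p_0 = tp(\bar b/K)$ to control denominators). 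Under $h$ this set becomes $h(D) = \{c\in G : {}\models \phi(\bar b c,\bar b)\}$, using $\sigma(\bar b) = \bar b c_\sigma$, so the content of (i) is that $D\mapsto h(D)$ matches $L_K$-definable subsets of $Aut_\partial(L/K)$ with $(C_K,+,\times)$-definable subsets of $G$.

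The \emph{crux} is to show $h(D)$ is $C_K$-definable, and I expect the descent of the defining parameter from $K$ (or $L$) down to $C_K$ to be the main obstacle. Since $C_K\subseteq K$, the entries of $c$ lie in $K$ and $\bar b c\in L$. By quantifier elimination in $DCF_0$, using $\partial(\bar b c) = A\bar b c$ to eliminate derivatives, $\phi(\bar b c,\bar b)$ is equivalent to a Boolean combination of polynomial conditions $P(\bar b c,\bar b)=0$ with $P$ over $K$. Substituting $(\bar b c)_{ij} = \sum_k b_{ik}c_{kj}$ and expanding in the monomials of $c$ rewrites each condition as $\sum_\beta c^\beta\,\overline{S_\beta} = 0$ in the coordinate ring $K[Y] = K[X]/I(\bar b/K)$ of the variety $Y$ of fundamental solution matrices; fixing a $K$-basis, this is a finite system of $K$-linear equations in the monomials $c^\beta$. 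Thus $h(D)$ is a $K$-definable subset of the constants $C^{n^2}$. Now I invoke that $C$ is stably embedded with induced structure a pure algebraically closed field, that $dcl(K)\cap C = K\cap C = C_K$, and that $ACF$ eliminates imaginaries: the canonical parameter of a $K$-definable subset of $C^{n^2}$ lies in $dcl^{eq}(K)\cap C = C_K$, so $h(D)$ is definable in $(C_K,+,\times)$. Stable embeddedness of the constants together with $dcl(K)\cap C = C_K$ is precisely what converts the apparent $L$-parameters into genuine $C_K$-parameters.

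For the reverse direction, given $E\subseteq G$ defined by a field formula $\theta(z)$ over $C_K\subseteq K$, set $\phi(\bar x,\bar y) := \psi(\bar x)\wedge\psi(\bar y)\wedge\theta(\bar y^{-1}\bar x)$, an $L_K$-formula (the matrix operation $\bar y^{-1}\bar x$ is rational, and $\psi$ guarantees nonsingularity). Since $c_\sigma = \bar b^{-1}\sigma(\bar b)$, we get $\{\sigma : \phi(\sigma(\bar b),\bar b)\} = h^{-1}(E)$, so $h^{-1}(E)$ is definable. As $h$ is a group isomorphism (Fact 3.9), it carries Boolean operations and the left-translation actions on the two sides to one another; combined with the two directions above, $h$ induces the asserted isomorphism of $Aut_\partial(L/K)$-Boolean algebras $Def(Aut_\partial(L/K))\to Def_{C_K}(G)$, which is (i).

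Finally, for (ii): by Proposition 2.8(iii) (applicable since $DCF_0$ is totally transcendental and $L$ is normal over $K$ in $K^{diff}$) there is a unique $Aut_\partial(L/K)$-invariant Keisler measure $\mu$ on $Def(Aut_\partial(L/K))$. Transporting $\mu$ along the equivariant Boolean isomorphism $h$ of (i) yields a left-translation-invariant Keisler measure $h_*\mu$ on $Def_{C_K}(G)$; conversely, any translation-invariant Keisler measure on $G$ pulls back along $h$ to an invariant Keisler measure on $Aut_\partial(L/K)$, which by uniqueness equals $\mu$. Hence there is a unique translation-invariant Keisler measure on $G$ over $C_K$, and it corresponds to $\mu$ under $h$, which is exactly (ii). In this way the uniqueness on the group $G$ is obtained for free from the uniqueness on $Aut_\partial(L/K)$, so no separate argument about invariant measures on algebraic groups over $C_K$ is needed.
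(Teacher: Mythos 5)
Your proposal is correct and follows essentially the same route as the paper: for (i), use that $\bar b$ generates $L$ over $K$ to put every definable set in the form $\{\sigma:\ \models\phi(\sigma(\bar b),\bar b)\}$, send it to $\{c\in G:\ \models\phi(\bar b c,\bar b)\}$ and descend to $(C_K,+,\times)$ by stable embeddedness of the constants (the paper compresses your polynomial-expansion argument into the phrase ``by stability''; your extra care is harmless, and note the descent of parameters is immediate once one recalls the standing hypothesis $C_K=C_{K^{diff}}$), and for the converse produce an explicit formula, yours being $\theta(\bar y^{-1}\bar x)$ where the paper existentially quantifies over $Y$. For (ii) both arguments hinge on the equivariance $h(\sigma(X))=h(\sigma)h(X)$; the one genuine difference is that you obtain existence and uniqueness of the translation-invariant Keisler measure on $G$ purely by transport from Proposition 2.8(iii), whereas the paper treats the $G$-side uniqueness as independently known, identifying the measure concretely as the average of the Dirac measures on the (generics of the) cosets of $G^{0}$. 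Your organization is logically self-contained and avoids citing facts about definably amenable groups in $ACF_0$, at the cost of not exhibiting what the measure on $G$ actually is; both are valid.
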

\begin{proof}
(i) As  ${\bar b} = \{b_{1},..,b_{n}\}$ generates $L$ over $K$, any definable subset $X$  of $Aut(L/K)$ can be written as $\{\sigma: \models \phi(\sigma({\bar b}), {\bar b})$ for some formula $\phi$ over $K$.  Then $h(X) = \{g\in G: \models \phi({\bar b}\cdot g, {\bar b})$ which is a definable in $K^{diff}$ subset of $G$, which by stability is a definable (in $(C_{K},+, \times)$) subset of $G$. Any definable (in $(C_{K},+,\cdot)$) subset of $G$ arises in this way.
For let $Y$ be such.  For each $g\in Y$, there is unique $\sigma\in Aut_{\partial}(L/K)$ such that $\sigma({\bar b}) = {\bar b}g$, and the set of all such $\sigma$ as $g$ varies in $Y$ is
$\{\sigma\in Aut(L/K): K^{diff}\models \exists y\in Y(\sigma({\bar b}) = {\bar b}y)\}$ which is a definable subset of $Aut_{\partial}(L/K)$.   One  can check that $h$ induces not only a bijection between the two Boolean algebras but also a Boolean algebra isomorphism.
\newline
(ii) One has to first show that for a definable subset $X$ of $Aut_{\partial}(L/K)$ and $\sigma\in Aut_{\partial}(L/K)$, 
$h(\sigma(X)) = h(\sigma)h(X)$, and then it is immediate  that $h$ takes  any $Aut_{\partial}(L/K)$ invariant Keisler measure on $Aut_{\partial}(L/K)$ to a translation invariant Keisler measure on $G$ (as a definable group in a model of $ACF_{0}$).  Note that the latter is just the average of the Dirac measures on the cosets of $G^{0}$ in $G$. 
\end{proof}

\vspace{5mm}
\noindent
Finally we extend the above observations to $K^{PV}$, the union of all the $PV$ extensions of $K$.  Otherwise said $K^{PV}$ is the differential field generated by all solutions in $K^{diff}$ of all homogeneous linear differential equations over $K$. Clearly $K^{PV}$ is also normal (over $K$ in $K^{diff}$).  $Aut_{\partial}(K^{PV}/K)$ is called the absolute $PV$-group of $K$, in analogy with $Aut(K^{alg}/K)$, the absolute Galois group of $K$.  $Aut_{\partial}(K^{PV}/K)$ has the structure of a proalgebraic group over $C_{K}$. See \cite{vdP-Singer}. But we give a few details here.

Note that if $L_{1},..,L_{m}$ are $PV$ extensions of $K$, then so is their (differential) field compositum. Also if $L_{1}$ and $L_{2}$ are $PV$ extensions of $K$ with $L_{1}\subseteq L_{2}$, and if $G_{1}$, $G_{2}$ are the corresponding linear algebraic groups over $C_{K}$ given by Fact 3.6, then the inclusion of $L_{1}$ in $L_{2}$ gives a canonical surjective homomorphism (of algebraic groups) from $G_{2}$ to $G_{1}$. 
Hence we have a directed system of linear algebraic groups over $C_{K}$, $G_{i}$ for $i\in I$ and  $\pi_{i,j}: G_{i} \to G_{j}$ for $i\geq j$. The inverse limit of this system is $\{(a_{i})_{i\in I}: a_{i}\in G_{i}(C_{K})$ for all $i$, and $\pi_{i,j}(a_{i}) = a_{j}$ whenever $i\geq j$\}. This can also be described as the group of $C_{K}$-rational points of an affine group scheme over $C_{K}$.   Let us call this group $G$.  We have canonical surjective homomorphisms $\pi_{i}:G\to G_{i}$ for all $i\in I$. 

\begin{Fact} The isomorphisms $h$ between automorphism groups (over $K$) of $PV$ extensions of $K$ and linear algebraic groups over $C_{K}$ cohere and yield   an isomorphism  $h_{PV}$ between $Aut_{\partial}(K^{PV}/K)$ and $G$
\end{Fact}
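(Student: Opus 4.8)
The plan is to exhibit both $Aut_{\partial}(K^{PV}/K)$ and $G$ as inverse limits of the \emph{same} directed system, transported across the finite-level isomorphisms $h_i$ from Fact 3.6, and then to observe that these isomorphisms cohere with the transition maps. Concretely, I would fix the directed system of $PV$ extensions $L_i$ ($i\in I$) whose union is $K^{PV}$, ordered by $i\geq j$ iff $L_j\subseteq L_i$, together with the transition homomorphisms $\pi_{i,j}\colon G_i\to G_j$ and the projections $\pi_i\colon G\to G_i$ described just before the statement.

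First I would identify $Aut_{\partial}(K^{PV}/K)$ with $\varprojlim_{i} Aut_{\partial}(L_i/K)$ under the restriction maps $\rho_{i,j}\colon Aut_{\partial}(L_i/K)\to Aut_{\partial}(L_j/K)$. Since each $L_i$ is normal over $K$, restriction to $L_i$ sends $Aut_{\partial}(K^{PV}/K)$ into $Aut_{\partial}(L_i/K)$, and these restrictions are compatible with the $\rho_{i,j}$, yielding a canonical map $r\colon Aut_{\partial}(K^{PV}/K)\to\varprojlim_{i} Aut_{\partial}(L_i/K)$. Injectivity of $r$ is immediate: an automorphism restricting to the identity on every $L_i$ is the identity on $K^{PV}=\bigcup_i L_i$. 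Surjectivity is the gluing statement: a compatible family $(\sigma_i)_{i\in I}$ defines a single map $\sigma$ on $K^{PV}$ by $\sigma(x)=\sigma_i(x)$ for $x\in L_i$, well defined by compatibility, and $\sigma$ is a differential automorphism of $K^{PV}$ fixing $K$ pointwise. Hence $r$ is an isomorphism.

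Next comes the crux, the coherence of the $h_i$. I would verify that for $i\geq j$ the square
\[
\begin{CD}
Aut_{\partial}(L_i/K) @>{h_i}>> G_i\\
@V{\rho_{i,j}}VV @VV{\pi_{i,j}}V\\
Aut_{\partial}(L_j/K) @>{h_j}>> G_j
\end{CD}
\]
commutes. Unwinding the definitions: if $\bar b_i$, $\bar b_j$ are the chosen fundamental solution matrices for the equations defining $L_i$, $L_j$, then for $\sigma\in Aut_{\partial}(L_i/K)$ one has $\sigma(\bar b_i)=\bar b_i\,h_i(\sigma)$, while $\bar b_j\in L_j\subseteq L_i$ satisfies $\sigma(\bar b_j)=(\rho_{i,j}\sigma)(\bar b_j)=\bar b_j\,h_j(\rho_{i,j}\sigma)$; since $\pi_{i,j}$ is by construction precisely the map recording the action of $\sigma|_{L_j}$ on the solution space of the $L_j$-equation, this gives $\pi_{i,j}(h_i(\sigma))=h_j(\rho_{i,j}(\sigma))$. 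This is the step I expect to be the main obstacle, not because it is deep but because it requires pinning down exactly how $\pi_{i,j}$ was extracted from the inclusion $L_j\subseteq L_i$ and matching it with restriction of automorphisms; once the fundamental matrices are chosen compatibly it is a direct computation.

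Finally, a commuting family of isomorphisms between two inverse systems induces an isomorphism of their inverse limits. Thus $(h_i)_i$ yields $\varprojlim_{i} Aut_{\partial}(L_i/K)\cong\varprojlim_{i} G_i=G$, and composing with $r$ gives the desired $h_{PV}\colon Aut_{\partial}(K^{PV}/K)\to G$. By construction $\pi_i\circ h_{PV}=h_i\circ(\text{restriction to }L_i)$, so $h_{PV}$ coheres with all the finite-level isomorphisms $h_i$, which is exactly the coherence asserted in the statement.
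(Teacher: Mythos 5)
Your argument is correct and is essentially the intended one: the paper states this as a Fact without proof (deferring to van der Put--Singer), but the paragraph preceding it sets up exactly the inverse system of groups $G_i$ and transition maps $\pi_{i,j}$ that you use, and the $\pi_{i,j}$ are indeed the maps induced by restriction of automorphisms (via Fact 3.6), so your coherence squares commute by construction. The only ingredient worth making explicit is that the identification $Aut_{\partial}(K^{PV}/K)\cong\varprojlim_i Aut_{\partial}(L_i/K)$, and in particular the surjectivity of the restriction maps, rests on the normality of each $L_i$ together with Lemma 2.6(ii); with that noted, your proof is complete.
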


\begin{Definition} By a definable subset of $G$ we mean the preimage under some $\pi_{i}$ of a definable (in $(C_{K},+,\times)$) subset of $G_{i}$.
\end{Definition}

\begin{Proposition} (i) $h_{PV}$ induces a bijection which we also call $h_{PV}$, between definable subsets of $Aut_{\partial}(K^{PV}/K)$ (in the sense of Definition 2.7) and definable subsets of $G$ (in the sense of Definition 3.9).
\newline
(ii) For $\sigma\in Aut_{\partial}(K^{PV}/K)$, and $X$ a definable subset of  $Aut_{\partial}(K^{PV}/K)$, 
$h_{PV}(\sigma(X)) = h_{PV}(\sigma)(h_{PV}(X))$.
\newline
(iii) $h_{PV}$ takes the unique $Aut_{\partial}(K^{PV}/K)$-invariant Keisler measure on $Aut_{\partial}(K^{PV}/K)$ to the unique $G$-invariant Keisler measure on the pro- (or $*$-)definable (in $ACF_{0}$) group $G$
\end{Proposition}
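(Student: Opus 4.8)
The plan is to reduce each clause to the finite levels $G_i$, where Proposition 3.7 applies directly, and then to pass to the inverse limit using the coherence $\pi_i\circ h_{PV}=h_i\circ r_i$ from Fact 3.8, where $r_i\colon Aut_{\partial}(K^{PV}/K)\to G_i$ denotes restriction to $L_i$ and $h_i$ the isomorphism of Fact 3.6.

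For (i), I would take a definable subset $X=\{\sigma:\ \models\phi(\sigma({\bar b}),{\bar b})\}$ of $Aut_{\partial}(K^{PV}/K)$, with $\phi$ over $K$ and ${\bar b}$ a finite tuple from $K^{PV}$. The first observation is that ${\bar b}$ lies in a single $PV$ extension $L_i$: each coordinate lies in some $PV$ extension, and the compositum of finitely many $PV$ extensions is again a $PV$ extension. Since $L_i$ is normal over $K$, each $\sigma$ restricts to $\sigma|_{L_i}\in G_i$, the value $\sigma({\bar b})$ depends only on this restriction, and $r_i$ is a surjective homomorphism (every $K$-automorphism of $L_i$ extends to $K^{PV}$, as in Lemma 2.6). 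Hence $X=r_i^{-1}(X_i)$ with $X_i=\{\tau\in G_i:\ \models\phi(\tau({\bar b}),{\bar b})\}$ definable in $Aut_{\partial}(L_i/K)$. Proposition 3.7(i) converts $X_i$ into a definable (in $(C_K,+,\times)$) subset $Y_i=h_i(X_i)$ of $G_i$, and coherence gives $h_{PV}(X)=\pi_i^{-1}(Y_i)$, definable in the sense of Definition 3.9; running Proposition 3.7(i) backwards shows every $\pi_i^{-1}(Y_i)$ arises so. One must also check that the output is independent of the level used to represent $X$, which follows from the $\pi_{i,j}$ being morphisms compatible with the $h_i$; this makes $h_{PV}$ a well-defined Boolean isomorphism.

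For (ii), since $r_i$ is a homomorphism and the action is left translation, $\sigma(X)=r_i^{-1}(\sigma|_{L_i}\cdot X_i)$. The finite-level equivariance established inside the proof of Proposition 3.7(ii) gives $h_i(\sigma|_{L_i}\cdot X_i)=\pi_i(h_{PV}(\sigma))\cdot Y_i$, and because $\pi_i$ is a homomorphism, $\pi_i^{-1}(\pi_i(g)\,Y_i)=g\cdot\pi_i^{-1}(Y_i)$ with $g=h_{PV}(\sigma)$; combining yields $h_{PV}(\sigma(X))=h_{PV}(\sigma)(h_{PV}(X))$. For (iii), $DCF_0$ is $\omega$-stable, so Proposition 2.8(iii) supplies a unique $Aut_{\partial}(K^{PV}/K)$-invariant Keisler measure $\mu$; by (i) and (ii) the equivariant Boolean isomorphism $h_{PV}$ carries $\mu$ to a $G$-invariant Keisler measure $\nu$ on $Def(G)$. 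Any $G$-invariant measure pushes forward under each surjective $\pi_i$ to a $G_i$-invariant measure on $Def(G_i)$, which by Proposition 3.7(ii) must be the unique translation-invariant $\mu_i$ (the average of the Dirac measures on the cosets of $G_i^0$); since the cylinders $\pi_i^{-1}(Y_i)$ generate $Def(G)$, this both forces uniqueness and identifies $\nu$. The same uniqueness applied with $\pi_{i,j}$ gives the coherence $(\pi_{i,j})_*\mu_i=\mu_j$ (concretely $\pi_{i,j}(G_i^0)=G_j^0$, so uniform measure on $G_i/G_i^0$ pushes to uniform measure on $G_j/G_j^0$).

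The step I expect to be the \textbf{main obstacle} is the pro-definable bookkeeping underlying (iii): verifying that the cylinder sets of Definition 3.9 form a Boolean algebra carrying a well-defined finitely additive measure, and that uniqueness of the invariant Keisler measure on each finite $G_i$ (Proposition 3.7(ii)) forces both the coherence of the $\mu_i$ and the uniqueness of the measure on $G$. The finite-level inputs are exactly Proposition 3.7; the real content is checking that the inverse-limit structure transports them without collision, which reduces --- cleanly but with some care --- to surjectivity of the transition maps $\pi_{i,j}$ and uniqueness of invariant measures in $ACF_0$.
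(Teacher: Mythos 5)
Your proposal is correct; the paper actually leaves this proof entirely to the reader, and your reduction to the finite levels $G_i$ via the coherence in Fact 3.8 and the finite-level statement of Proposition 3.7, followed by the well-definedness and uniqueness checks using surjectivity of the $\pi_i$, $\pi_{i,j}$ and uniqueness of invariant Keisler measures on definable groups in $ACF_0$, is exactly the intended argument and fills in the omitted details accurately. The references you cite (Lemma 2.6 for surjectivity of restriction, Proposition 2.8(iii) for the measure on $Aut_{\partial}(K^{PV}/K)$) are the right ones.
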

\begin{proof} Left to the reader.
\end{proof}

\end{document}